\documentclass{amsart}
\usepackage{amsmath, amsthm, amscd, amsfonts, amssymb, graphicx, color}
\usepackage{enumerate}
\usepackage[bookmarksnumbered, colorlinks, plainpages]{hyperref}
\usepackage{tikz}
\usepackage{comment}

\newtheorem{theorem}{Theorem}[section]
\newtheorem{lemma}[theorem]{Lemma}
\newtheorem{proposition}[theorem]{Proposition}

\newtheorem{corollary}[theorem]{Corollary}
\newtheorem{definition}[theorem]{Definition}
\newtheorem{example}[theorem]{Example}
\newtheorem{remark}[theorem]{Remark}

\newcommand{\R}{\mathbb{R}}

\newcommand{\N}{\mathbb{N}}

\excludecomment{mysection}

\begin{document}
	
\title[A generalization of spin factors]{A generalization of spin factors}
\author{Anil Kumar Karn}
	
\address{School of Mathematical Sciences, National Institute of Science Education and Research, HBNI, Bhubaneswar, P.O. - Jatni, District - Khurda, Odisha - 752050, India.}

\email{\textcolor[rgb]{0.00,0.00,0.84}{anilkarn@niser.ac.in}}

\subjclass[2020]{Primary: 46B40; Secondary: 46B20.}
	
\keywords{Adjoining an order unit, strictly convex space, absolutely ordered space, absolute order unit space, $JB$-algebra, spin factor.}
	
\begin{abstract}
	Using a technique of adjoining an order unit to a normed linear space, we have characterized strictly convex spaces among normed linear spaces and Hilbert spaces among strictly convex Banach spaces respectively. This leads to a generalization of spin factors and provides a new class of absolute order unit spaces.  
\end{abstract}

\maketitle 

\section{Introduction}
Let $H$ be a real Hilbert space, $A$ be a C$^*$-algebra and let $\chi: H \to A$ be linear mapping. Then the set of conditions:
\begin{enumerate}
	\item $2 \chi(\xi) \circ \chi(\eta) := \chi(\xi) \chi(\eta) + \chi(\eta) \chi(\xi) = 0$; and 
	\item $2 \chi(\xi) \circ \chi(\eta)^* := \chi(\xi) \chi(\eta)^* + \chi(\eta)^* \chi(\xi) = \langle \xi, \eta \rangle 1$
\end{enumerate}
for all $\xi, \eta \in H$ is called a canonical anticommutation relation (CAR) on $A$. The CARs play an important role in mathematical physics. Let us put $\chi(\xi) + \chi(\xi)^* =  \psi(\xi)$ for each $\xi \in H$. Then $\psi:H \to A_{sa}$ is also an isometric linear mapping satisfying 
\begin{enumerate}
	\item[(3)] $\psi(\xi) \circ \psi(\eta) = \langle \xi, \eta \rangle 1$ 
\end{enumerate} 
for all $\xi, \eta \in H$. Also, $\psi(H) + \mathbb{R} 1$ is a unital $JC$-algebra of $A_{sa}$. (See, for example \cite[6.1.1]{H-OS84}.) Conversely, if we define 
\begin{enumerate}
	\item[(4)] $(\xi, \alpha) \circ (\eta, \beta) = (\beta \xi + \alpha \eta, \langle \xi, \eta \rangle + \alpha \beta)$
\end{enumerate}
for $\xi, \eta \in H$ and $\alpha, \beta \in \mathbb{R}$, then $H \oplus \mathbb{R}$ is isometrically Jordan isomorphic to a $JC$-algebra. For details, please refer to \cite[Chapter 6]{H-OS84}. Such a space is called a spin factor. In this paper, we discuss an order theoretic generalization of spin factors.

A normed linear space is said to be strictly convex, if the line segment joining any two points on its unit sphere does not meet the unit sphere except for its extremities. The class of strictly convex spaces include $\ell_p$-spaces as well as $L_p$-spaces for $1 < p < \infty$.  It is an important class of normed linear spaces and enjoys many geometric properties. A detailed study on this topic can be found in several books, see, for example \cite{Dies75} as a good source of information. In what follows, we shall explore an order theoretic aspect of strict convexity.

The notion of absolute order unit space was introduced by the author in \cite{K18} to propose a non-commutative model for vector lattices. Its examples include the self-adjoint part of a unital C$^*$-algebras or more generally, unital JB-algebras. In particular, a spin factor is an absolute order unit space. In this paper, we describe a new class of absolute order unit spaces by adjoining an order unit to a strictly convex space which is a generalization of spin factors.

It is a folklore that an order unit can be adjoined to a normed linear space resulting in an order unit space. The following construction has been adopted from \cite[1.6.1]{Jam70} and is apparently due to M. M. Day. Let $(V, \Vert \cdot \Vert)$ be a real normed linear space. Consider $V^{(\cdot)} := V \times \mathbb{R}$ and define 
$$V^{(\cdot)+} := \lbrace (v, \alpha): \Vert v \Vert \le \alpha \rbrace.$$ 
Then $(V^{(\cdot)}, V^{(\cdot)+})$ becomes a real ordered space such that $V^{(\cdot)+}$ is proper, generating and Archimedean. Also, $e = (0, 1) \in V^{(\cdot)+}$ is an order unit for $V^{(\cdot)}$ so that $(V^{(\cdot)}, e)$ becomes an order unit space.  The corresponding order unit norm is given by 
$$\Vert (v, \alpha) \Vert = \Vert v \Vert + \vert \alpha \vert$$
for all $(v, \alpha) \in V^{(\cdot)}$. Thus $V$, identified with $\lbrace (v, 0): v \in V \rbrace$, can be identified as a closed subspace of $V^{(\cdot)}$. Further, $V^{(\cdot)}$ is complete if and only if so is $V$. 

In this paper, we describe the notion of an absolute value in $V^{(\cdot)}$ which arises naturally from the definition of $V^{(\cdot)+}$. We prove that the absolute value satisfies all the conditions to confirm $V^{(\cdot)}$ as an absolutely ordered space (definition is given below) provided that $V$ is strictly convex. Further, for each $t$, $1 \le t \le \infty$, we introduce a norm on $V^{(\cdot)}$ so that it becomes an absolute order smooth $t$-normed space. For $t = \infty$, $V^{(\infty)}$ becomes an absolute order unit space (Theorem \ref{10a}) and for $t = 1$, $V^{(1)}$ becomes an absolutely base normed space (Theorem \ref{11}). 

Next, we focus on the absolute order unit space $(V^{(\infty)}, e)$ where $V$ is a strictly convex Banach space. We discuss its spectral property and prove that any element $(v, \alpha) \in V^(\infty)$ has the spectral decomposition of the form
$$(v, \alpha) = (\alpha - \Vert v \Vert) \left( \frac{- v}{2 \Vert v \Vert}, \frac 12 \right) + (\alpha + \Vert v \Vert) \left( \frac{v}{2 \Vert v \Vert}, \frac 12 \right).$$ 
This observation leads to a notion of the (positive) integral powers of $(v, \alpha)$ and consequently to the notion of $\circ$ in $V^{({\infty})}$. We prove that $\circ$ is bilinear in $V^{({\infty})}$ if and only if $V$ is a Hilbert space. As a consequence, we observe that $V^{({\infty})}$ is a generalization of spin factors.

Now, we sketch a summary of the paper.

In Section 2, we discuss the properties of an absolute value on $V$ which naturally arises out of the definition of $V_0^{(\cdot)+}$. Further, we prove that it is necessary as well as sufficient that $V_0$ must be strictly convex in order to $V_0^{(\cdot)}$ be an absolutely ordered space. We further show that $(V^{(\cdot)}, e)$ is an absolute order unit space. This is a new example of an absolute order unit space. 

In Sections 3, we discuss the spectral property in the absolute order unit space $V^{({\infty})}$ to introduce a binary operation in it. We show that this binary operation is bilinear if and only if $V$ is a Hilbert space.

\section{The order structure}

In this section, we continue with an arbitrarily fixed real normed linear space $V$ and the corresponding real ordered space $(V^{(\cdot)}, V^{(\cdot)+})$ where $V^{(\cdot)} = V \times \mathbb{R}$ and $V^{(\cdot)+} = \lbrace (v, \alpha): \Vert v \Vert \le \alpha \rbrace$. We also keep the order unit $e = (0, 1) \in V^{(\cdot)+}$. These notations are fixed throughout the paper, unless stated otherwise. 

The following observation is also going to be handy throughout the paper.
\begin{enumerate}
	\item $(v, \alpha) \in V^{(\cdot)+}$ if and only if $\Vert v \Vert \le \alpha$; 
	\item $(v, \alpha) \in - V^{(\cdot)+}$ if and only if $\Vert v \Vert \le - \alpha$; and 
	\item $(v, \alpha) \notin V^{(\cdot)+} \cup - V^{(\cdot)+}$ if and only if $\vert \alpha \vert < \Vert v \Vert$.
\end{enumerate}
Based on this observation, we propose the following: 
\begin{definition}
	For $(v, \alpha) \in V^{(\cdot)}$, we define 
	\begin{displaymath}
	\vert (v, \alpha) \vert = \begin{cases}
	(v, \alpha),  &\textrm{if} ~ (v, \alpha) \in V^{(\cdot)+} \\
	- (v, \alpha), &\textrm{if} ~ (v, \alpha) \in - V^{(\cdot)+} \\
	\left(\frac{\alpha}{\Vert v \Vert} v, \Vert v \Vert \right), &\textrm{if} ~ (v, \alpha) \notin V^{(\cdot)+} \cup - V^{(\cdot)+}.
	\end{cases}
	\end{displaymath}
	Then $\vert \cdot \vert: V^{(\cdot)} \to V^{(\cdot)+}$ is called an \emph{absolute value} in $V^{(\cdot)}$.
\end{definition}
Let us recall the following term introduced in \cite{K18}. (See also, \cite{K16}.)
\begin{definition}\cite[Definition 3.4]{K18}
	Let $(U, U^+)$ be a real ordered vector space and let $\vert\cdot\vert: U \to U^+$ satisfy the following conditions:               
	\begin{enumerate}
		\item $\vert v \vert = v$ if $v \in U^+$;
		\item $\vert v \vert \pm v \in U^+$ for all $v \in U$;
		\item $\vert k v \vert = \vert k \vert \vert v \vert$ for all $v \in U$ and $k \in \mathbb{R}$; 
		\item If $u, v, w \in U$ with $\vert u - v \vert = u + v$ and $\vert u - w \vert = u + w,$ then $\vert u - \vert v \pm w \vert \vert = u + \vert v \pm w \vert$; 
		\item If $u, v$ and $w \in U$ with $\vert u - v \vert = u + v$ and $0 \le w \le v,$ then $\vert u - w \vert = u + w$.
	\end{enumerate}                 
	Then $(U, U^+, \vert\cdot\vert)$ is called a \emph{absolutely ordered space}. 
\end{definition}
In this section, we show that $(V^{(\cdot)}, V^{(\cdot)+}, \vert \cdot \vert)$ is an absolutely ordered space. 
\begin{proposition}\label{01}
	For $(v, \alpha) \in V^{(\cdot)}$ and $k \in \mathbb{R}$, we have 
	\begin{enumerate}[$(a)$]
		\item $\vert (v, \alpha) \vert = (v, \alpha)$ if $(v, \alpha) \in V^{(\cdot)+}$; 
		\item $\vert (v, \alpha) \vert \pm (v, \alpha) \in V^{(\cdot)+}$; and 
		\item $\vert k (v, \alpha) \vert = \vert k \vert \vert (v, \alpha) \vert$.  
	\end{enumerate}
\end{proposition}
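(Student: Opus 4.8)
The plan is to verify the three assertions by unwinding the piecewise definition of the absolute value, partitioning $V^{(\cdot)}$ into the three regions $V^{(\cdot)+}$, $-V^{(\cdot)+}$, and the outside region $\lbrace (v,\alpha): \vert\alpha\vert < \Vert v\Vert\rbrace$ recorded in the observation preceding the definition. I note at the outset that no strict convexity of $V$ is needed here; this proposition is a purely computational consequence of the formula, so it holds for an arbitrary normed linear space. Part $(a)$ is then immediate, since the first branch of the definition asserts exactly that $\vert(v,\alpha)\vert = (v,\alpha)$ whenever $(v,\alpha) \in V^{(\cdot)+}$.

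For part $(c)$ I would first dispose of $k = 0$, where both sides vanish, and then treat $k > 0$ and $k < 0$ separately. The key observation is that multiplication by a positive scalar preserves each of the three regions, whereas multiplication by a negative scalar interchanges $V^{(\cdot)+}$ and $-V^{(\cdot)+}$ while fixing the outside region, because $\vert k\alpha\vert < \Vert kv\Vert$ holds if and only if $\vert\alpha\vert < \Vert v\Vert$. In the two cone branches the identity follows directly: for instance, when $(v,\alpha) \in V^{(\cdot)+}$ and $k < 0$ one has $k(v,\alpha) \in -V^{(\cdot)+}$, so $\vert k(v,\alpha)\vert = -k(v,\alpha) = \vert k\vert (v,\alpha) = \vert k\vert\,\vert(v,\alpha)\vert$. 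The only computation requiring care is the outside branch: using $\Vert kv\Vert = \vert k\vert\,\Vert v\Vert$ and $k^2 = \vert k\vert^2$ one checks that $\frac{k\alpha}{\Vert kv\Vert}\,kv = \vert k\vert\,\frac{\alpha}{\Vert v\Vert}\,v$ for every real $k$, so the formula is genuinely homogeneous of degree one and the sign of $k$ washes out of the first coordinate.

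For part $(b)$ I would again run through the three regions and verify that both $\vert(v,\alpha)\vert + (v,\alpha)$ and $\vert(v,\alpha)\vert - (v,\alpha)$ satisfy the cone condition. In the first two branches one of the two combinations equals $\pm 2(v,\alpha)$ and the other equals $(0,0)$, both evidently in $V^{(\cdot)+}$. The substantive case is the outside branch, where $\vert(v,\alpha)\vert \pm (v,\alpha) = \left(\frac{\alpha \pm \Vert v\Vert}{\Vert v\Vert}\,v,\ \Vert v\Vert \pm \alpha\right)$; the norm of the first coordinate is $\vert\alpha \pm \Vert v\Vert\vert$, and since $\vert\alpha\vert < \Vert v\Vert$ forces $\alpha + \Vert v\Vert > 0$ and $\alpha - \Vert v\Vert < 0$, this norm equals exactly the second coordinate $\Vert v\Vert \pm \alpha$. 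Thus both combinations land on the boundary of $V^{(\cdot)+}$, which confirms the claim.

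The main obstacle, such as it is, is purely bookkeeping rather than conceptual: keeping the sign conventions straight in part $(c)$ for $k < 0$, where the two cone branches swap roles, and recognising in part $(b)$ that in the outside region both combinations sit precisely on the boundary (equality in $\Vert\cdot\Vert \le (\cdot)$) rather than in the interior of the cone. Everything else reduces to the elementary inequalities that define $V^{(\cdot)+}$.
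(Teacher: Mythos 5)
Your proposal is correct and follows essentially the same route as the paper: part $(a)$ is read off the definition, the two cone branches of $(b)$ and $(c)$ are dispatched as trivial, and the substantive work is the same norm computation in the region $\vert\alpha\vert < \Vert v\Vert$, using $\alpha + \Vert v\Vert > 0$ and $\alpha - \Vert v\Vert < 0$ for $(b)$ and the homogeneity $\frac{k\alpha}{\Vert kv\Vert}kv = \vert k\vert \frac{\alpha}{\Vert v\Vert}v$ for $(c)$. Your explicit remarks that scalar multiplication preserves or swaps the three regions, and that strict convexity plays no role in this proposition, are details the paper leaves implicit but add nothing beyond its argument.
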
 
\begin{proof}
	Verification of (a) is straight forward. Further, (b) and (c) also may be verified easily if $(v, \alpha) \in V^{(\cdot)+} \bigcup - V^{(\cdot)+}$. So we assume that $(v, \alpha) \notin V^{(\cdot)+} \bigcup - V^{(\cdot)+}$. Then $\vert \alpha \vert < \Vert v \Vert$ and $\vert (v, \alpha) \vert = \left( \frac{\alpha}{\Vert v \Vert}v, \Vert v \Vert \right)$. Thus 
	$$\vert (v, \alpha) \vert + (v, \alpha) = \left( \frac{(\alpha + \Vert v \Vert)}{\Vert v \Vert} v, \Vert v \Vert + \alpha \right)$$ 
	and 
	$$\vert (v, \alpha) \vert - (v, \alpha) = \left( \frac{(\alpha - \Vert v \Vert)}{\Vert v \Vert} v, \Vert v \Vert - \alpha \right).$$ 
	Since 
	$$\left\Vert \frac{(\alpha + \Vert v \Vert)}{\Vert v \Vert} v \right\Vert = \Vert v \Vert + \alpha$$ 
	and 
	$$\left\Vert \frac{(\alpha - \Vert v \Vert)}{\Vert v \Vert} v \right\Vert = \Vert v \Vert - \alpha,$$ 
	we again get that $\vert (v, \alpha) \vert \pm (v, \alpha) \in V^{(\cdot)+}$. Next, 
	$$\vert k(v, \alpha) \vert = \left( \frac{(k \alpha)}{\Vert k v \Vert} (k v), \Vert k v \Vert \right) = \vert k \vert \left( \frac{\alpha}{\Vert v \Vert} v, \Vert v \Vert \right) = \vert k \vert \vert (v, \alpha).$$ 
	Thus (b) and (c) also hold in all the three cases. 
\end{proof}
To prove the other conditions, we need to recall the following notion from \cite{K18}.
\begin{definition}
	For $(u, \alpha), (v, \beta) \in V_0^{(\cdot)}$, we say that $(u, \alpha)$ is \emph{orthogonal} to $(v, \beta)$ (we write, $(u, \alpha) \perp (v, \beta)$), if $\vert (u, \alpha) - (v, \beta) \vert = (u, \alpha) + (v, \beta)$.
\end{definition}
\begin{remark}
	\begin{enumerate}[$(i)$]
		\item $(u, \alpha) \perp 0$ for all $(u, \alpha) \in V^{(\cdot)+}$.
		\item If $(u, \alpha) \perp (v, \beta)$, then by Proposition \ref{01}(b), we have $(u, \alpha), (v, \beta) \in V^{(\cdot)+}$. 
		\item If $(u, \alpha) \perp e$, then $(u, \alpha) = 0$. 
	\end{enumerate}	
\end{remark}
\begin{proposition}\label{02}
	Let $(u, \alpha) \perp (v, \beta)$ with $(u, \alpha), (v, \beta) \in V^{(\cdot)+} \setminus \lbrace 0 \rbrace$. Then $\alpha = \Vert u \Vert > 0$, $\beta = \Vert v \Vert > 0$ and $\frac{u}{\Vert u \Vert} = - \frac{v}{\Vert v \Vert}$. Conversely, let $u \in V$ with $\Vert u \Vert = 1$. Then $(\alpha u, \alpha ) \perp (- \beta u, \beta)$ for all $\alpha, \beta \in \mathbb{R}^+$.
\end{proposition}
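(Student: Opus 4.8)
The plan is to unwind the definition of orthogonality, namely $\vert (u,\alpha) - (v,\beta)\vert = (u,\alpha)+(v,\beta)$, and extract information from the three-branch formula for the absolute value applied to $(u-v,\alpha-\beta)$. First I would record that both elements being nonzero members of $V^{(\cdot)+}$ forces $\alpha>0$ and $\beta>0$: indeed $(u,\alpha)\in V^{(\cdot)+}$ gives $\Vert u\Vert\le\alpha$, so $\alpha=0$ would yield $u=0$ and hence $(u,\alpha)=0$, contrary to hypothesis; symmetrically $\beta>0$. Then I would do a case analysis on which branch computes $\vert(u-v,\alpha-\beta)\vert$. If $(u-v,\alpha-\beta)\in V^{(\cdot)+}$, the defining equation reads $(u-v,\alpha-\beta)=(u+v,\alpha+\beta)$, forcing $v=0$ and $\beta=0$, i.e. $(v,\beta)=0$; the case $(u-v,\alpha-\beta)\in -V^{(\cdot)+}$ symmetrically forces $(u,\alpha)=0$. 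Both contradict the standing hypothesis, so the third, non-conical branch must apply.

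In that branch we have $\vert\alpha-\beta\vert<\Vert u-v\Vert$ and $\vert(u-v,\alpha-\beta)\vert=\left(\frac{\alpha-\beta}{\Vert u-v\Vert}(u-v),\,\Vert u-v\Vert\right)$. I would equate this with $(u+v,\alpha+\beta)$ coordinatewise. The scalar coordinate gives $\Vert u-v\Vert=\alpha+\beta$; feeding this into the vector coordinate and clearing the denominator yields $(\alpha-\beta)(u-v)=(\alpha+\beta)(u+v)$, which collapses to the single relation $\alpha v=-\beta u$. From $\alpha v=-\beta u$ one writes $u-v=\frac{\alpha+\beta}{\alpha}u$ (using $\alpha>0$), so $\Vert u-v\Vert=\frac{\alpha+\beta}{\alpha}\Vert u\Vert=\alpha+\beta$ forces $\Vert u\Vert=\alpha$; taking norms in $\alpha v=-\beta u$ then gives $\alpha\Vert v\Vert=\beta\alpha$, hence $\Vert v\Vert=\beta$; and $\alpha v=-\beta u$ rewrites precisely as $\frac{u}{\Vert u\Vert}=-\frac{v}{\Vert v\Vert}$. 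This establishes the forward implication.

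For the converse I would simply substitute. With $\Vert u\Vert=1$ and $\alpha,\beta\in\mathbb{R}^+$ positive, compute $(\alpha u,\alpha)-(-\beta u,\beta)=((\alpha+\beta)u,\alpha-\beta)$, whose vector part has norm $\alpha+\beta>\vert\alpha-\beta\vert$, so the non-conical branch applies and the absolute value equals $\left(\frac{\alpha-\beta}{\alpha+\beta}(\alpha+\beta)u,\,\alpha+\beta\right)=((\alpha-\beta)u,\alpha+\beta)$, which is exactly $(\alpha u,\alpha)+(-\beta u,\beta)$; hence orthogonality holds. The boundary cases where one of $\alpha,\beta$ vanishes reduce the corresponding element to $0$ and are covered directly by Remark~(i).

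I expect the only real care to lie in the case analysis, specifically in confirming that the two conical branches are genuinely ruled out by the nonvanishing hypotheses so that the non-conical formula is legitimately in force, and in tracking the sign and normalization when translating the relation $\alpha v=-\beta u$ into the three stated conclusions. The remaining computation is routine linear algebra, and it is worth noting that strict convexity of $V$ plays no role in this proposition, in contrast with the deeper conditions of an absolutely ordered space treated later.
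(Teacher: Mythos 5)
Your proposal is correct and follows essentially the same route as the paper: both arguments rule out the two conical branches using the nonvanishing hypotheses, equate the non-conical formula for $\vert(u-v,\alpha-\beta)\vert$ with $(u+v,\alpha+\beta)$ to extract $\alpha v+\beta u=0$ and $\Vert u-v\Vert=\alpha+\beta$, and then deduce $\Vert u\Vert=\alpha$, $\Vert v\Vert=\beta$; the converse is the same direct substitution. The only difference is cosmetic — you spell out the final normalization $\frac{u}{\Vert u\Vert}=-\frac{v}{\Vert v\Vert}$, which the paper leaves implicit.
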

\begin{proof}
	First, we assume that $(u, \alpha) \perp (v, \beta)$ with $(u, \alpha), (v, \beta) \in V^{(\cdot)+} \setminus \lbrace 0 \rbrace$. As $\alpha = 0$ forces $\Vert u \Vert = 0$ for $(u, \alpha) \in V^{(\cdot)+}$, we must have $\alpha > 0$. For the same reason, we have $\beta > 0$ as well. Further, if $(u, \alpha) - (v, \beta) \in V^{(\cdot)+}$, then as $(u, \alpha) \perp (v, \beta)$, we have $(u, \alpha) + (v, \beta) = \vert (u, \alpha) - (v, \beta) \vert$. But then, by Proposition \ref{01}(a), we shall get $(v, \beta) = 0$ so that $(u, \alpha) - (v, \beta) \notin V^{(\cdot)+}$. In the same way, we can also show that $(u, \alpha) - (v, \beta) \notin - V^{(\cdot)+}$. Therefore, $\vert \alpha - \beta \vert < \Vert u - v \Vert$ and it follows that 
	$$\vert (u, \alpha) - (v, \beta) \vert = \left(\frac{(\alpha - \beta)}{\Vert u - v \Vert} (u - v), \Vert u - v \Vert \right).$$ 
	Since $(u, \alpha) + (v, \beta) = \vert (u, \alpha) - (v, \beta) \vert$, we deduce that $u + v = \frac{(\alpha - \beta)}{\Vert u - v \Vert} (u - v)$ and $\alpha + \beta = \Vert u - v \Vert$. Thus $\alpha v + \beta u = 0$ so that $\alpha \Vert v \Vert = \beta \Vert u \Vert$. Also, as $\alpha + \beta = \Vert u - v \Vert$, we further have 
	$$\alpha + \beta = \left\Vert u + \frac{\beta}{\alpha} u \right\Vert = \frac{(\alpha + \beta)}{\alpha} \Vert u \Vert$$ 
	so that $\Vert u \Vert = \alpha$ and consequently, $\Vert v \Vert = \beta$. 
	
	Conversely, assume that $u \in V$ with $\Vert u \Vert = 1$ and let $\alpha, \beta \in \mathbb{R}^+$. For definiteness, we let $\alpha \ge \beta > 0$ as $\alpha = 0$ and $\beta = 0$ are trivial cases. Since 
	$$\alpha + \beta > \alpha - \beta = \vert \alpha - \beta \vert,$$ 
	we have 
	$$\vert (\alpha u, \alpha ) - (- \beta u, \beta) \vert = \left( (\alpha - \beta) u, \alpha + \beta \right) = (\alpha u, \alpha ) + (- \beta u, \beta).$$ 
	Thus $(\alpha u, \alpha ) \perp (- \beta u, \beta)$.
\end{proof}
\begin{corollary}\label{03}
	Let $(u, \alpha), (v, \beta), (w, \gamma) \in V^{(\cdot)+} \setminus \lbrace 0 \rbrace$ be such that $(u, \alpha) \perp (v, \beta)$ and $(u, \alpha) \perp (w, \gamma)$. Then $(u, \alpha) \perp \vert (v, \beta) \pm (w, \gamma) \vert$.
\end{corollary}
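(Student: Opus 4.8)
The plan is to reduce everything to the explicit description of orthogonal pairs furnished by Proposition \ref{02}. First I would apply the forward direction of Proposition \ref{02} to each of the two hypotheses $(u, \alpha) \perp (v, \beta)$ and $(u, \alpha) \perp (w, \gamma)$. This yields $\alpha = \Vert u \Vert > 0$, $\beta = \Vert v \Vert > 0$, $\gamma = \Vert w \Vert > 0$, together with $\frac{u}{\Vert u \Vert} = - \frac{v}{\Vert v \Vert}$ and $\frac{u}{\Vert u \Vert} = - \frac{w}{\Vert w \Vert}$. The crucial consequence is that both orthogonality relations are governed by the \emph{same} unit vector: setting $x = \frac{u}{\Vert u \Vert}$, we obtain $u = \alpha x$, $v = - \beta x$ and $w = - \gamma x$ with $\Vert x \Vert = 1$ and $\alpha, \beta, \gamma > 0$. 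Thus the three elements share a single direction in the $V$-coordinate, and the problem collapses to a one-dimensional computation.

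Next I would compute the two combinations directly. For the sum, $(v, \beta) + (w, \gamma) = (- (\beta + \gamma) x, \beta + \gamma)$, whose $V$-coordinate has norm exactly $\beta + \gamma$, so it already lies in $V^{(\cdot)+}$ and coincides with its own absolute value. For the difference, $(v, \beta) - (w, \gamma) = ((\gamma - \beta) x, \beta - \gamma)$, whose $V$-coordinate has norm $\vert \gamma - \beta \vert = \vert \beta - \gamma \vert$; hence this element lies on the boundary of $V^{(\cdot)+}$ or of $- V^{(\cdot)+}$ according to the sign of $\beta - \gamma$. In every case, applying the definition of the absolute value shows that $\vert (v, \beta) \pm (w, \gamma) \vert$ is of the form $(- \delta x, \delta)$ for some $\delta \ge 0$, with $\delta = \beta + \gamma$ for the sum and $\delta = \vert \beta - \gamma \vert$ for the difference.

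Finally I would invoke the converse part of Proposition \ref{02}, which asserts that for the unit vector $x$ one has $(\alpha x, \alpha) \perp (- \delta x, \delta)$ for all $\alpha, \delta \in \mathbb{R}^+$. Since $(u, \alpha) = (\alpha x, \alpha)$ and $\vert (v, \beta) \pm (w, \gamma) \vert = (- \delta x, \delta)$, this delivers precisely $(u, \alpha) \perp \vert (v, \beta) \pm (w, \gamma) \vert$. The one situation escaping the converse is the degenerate case $\delta = 0$, namely $\beta = \gamma$ in the difference, so that $\vert (v, \beta) - (w, \gamma) \vert = 0$; here orthogonality is immediate since $(u, \alpha) \perp 0$ for every element of $V^{(\cdot)+}$ (see the Remark preceding Proposition \ref{02}).

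I expect the only genuine care to be needed in the difference case, where the sign of $\beta - \gamma$ dictates whether the element sits in $V^{(\cdot)+}$ or in $- V^{(\cdot)+}$ before taking the absolute value, and where the boundary subcase $\beta = \gamma$ must be treated separately through the zero element. Everything else is a routine substitution once the common direction $x$ has been identified, which is the conceptual heart of the argument.
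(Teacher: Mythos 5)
Your proposal is correct and follows essentially the same route as the paper: reduce to the common unit direction via Proposition \ref{02}, observe that $\vert (v,\beta)\pm(w,\gamma)\vert = \vert\beta\pm\gamma\vert\left(-\frac{u}{\Vert u\Vert},1\right)$, and conclude by the converse part of Proposition \ref{02}. The paper handles the sign analysis in one stroke using the homogeneity $\vert k x\vert = \vert k\vert\,\vert x\vert$ from Proposition \ref{01}(c) rather than your explicit case split on $\beta-\gamma$, but the argument is the same.
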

\begin{proof}
	By Proposition \ref{02}, we have $\alpha = \Vert u \Vert > 0$, $\beta = \Vert v \Vert > 0$, $\gamma = \Vert w \Vert > 0$ and $- \frac{u}{\Vert u \Vert} = \frac{v}{\Vert v \Vert} = \frac{w}{\Vert w \Vert}$. Thus 
	$$(v, \beta) \pm (w, \gamma) = (\beta \pm \gamma) \left(-  \frac{u}{\Vert u \Vert}, 1 \right)$$ 
	so that 
	$$\vert (v, \beta) \pm (w, \gamma) \vert = \vert \beta \pm \gamma \vert \left(-  \frac{u}{\Vert u \Vert}, 1 \right).$$ 
	Now, again applying Proposition \ref{02}, we get $(u, \alpha) \perp \vert (v, \beta) \pm (w, \gamma) \vert$.
\end{proof}
We consider a special kind of orthogonal pair introduced in \cite{K18}.
\begin{definition}
	Let $(0, 0) \le (u, \alpha) \le (0, 1)$. We say that $(u, \alpha)$ is an \emph{order projection}, if $(u, \alpha) \perp (- u, 1 - \alpha)$. The set of all order projections in $V^{(\cdot)}$ is denoted by $OP(V^{(\cdot)})$.
\end{definition} 
We know the complete description of $OP(V^{(\cdot)})$.
\begin{proposition}
	$OP(V^{(\cdot)}) = \left\lbrace \left(u, \frac{1}{2} \right): \Vert u \Vert = \frac{1}{2} \right\rbrace \cup \left\lbrace 0, e \right\rbrace$.
\end{proposition}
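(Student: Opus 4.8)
The plan is to rewrite the order-projection condition in a form to which Proposition \ref{02} applies directly. First I would note that $e - (u, \alpha) = (0,1) - (u, \alpha) = (-u, 1 - \alpha)$, so that $(u, \alpha) \in OP(V^{(\cdot)})$ precisely when $0 \le (u, \alpha) \le e$ and $(u, \alpha) \perp \left( e - (u, \alpha) \right)$. With this reformulation the two inclusions can be treated separately.

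For the forward inclusion, suppose $(u, \alpha) \in OP(V^{(\cdot)})$ and abbreviate $(v, \beta) = (-u, 1 - \alpha)$. The principal case is that both $(u, \alpha)$ and $(v, \beta)$ are nonzero. Here I would apply Proposition \ref{02} to the orthogonal pair $(u,\alpha) \perp (v,\beta)$, which yields $\alpha = \Vert u \Vert > 0$ and $1 - \alpha = \beta = \Vert v \Vert = \Vert u \Vert > 0$ (the relation $u/\Vert u \Vert = -v/\Vert v \Vert$ then holds automatically). Combining $\alpha = \Vert u \Vert$ with $1 - \alpha = \Vert u \Vert$ forces $\alpha = \tfrac12$ and $\Vert u \Vert = \tfrac12$, landing in the first set on the right-hand side. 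The two degenerate cases are $(u, \alpha) = 0$, giving the element $0$, and $(v, \beta) = (-u, 1 - \alpha) = 0$, giving $u = 0$ and $\alpha = 1$, i.e. the element $e$.

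For the reverse inclusion I would verify that each listed candidate is genuinely an order projection. The elements $0$ and $e$ are immediate: for $0$ one checks $0 \le 0 \le e$ and $0 \perp e$, since $\vert 0 - e \vert = \vert -e \vert = e = 0 + e$, and the case of $e$ is symmetric (with $e \perp 0$ covered by Remark $(i)$). For an element $(u, \tfrac12)$ with $\Vert u \Vert = \tfrac12$, the bounds $0 \le (u, \tfrac12) \le e$ hold because $\Vert u \Vert = \tfrac12 \le \tfrac12$ and $\Vert -u \Vert = \tfrac12 \le 1 - \tfrac12$. To obtain the orthogonality $(u, \tfrac12) \perp (-u, \tfrac12)$ I would invoke the converse half of Proposition \ref{02} with the unit vector $w = 2u$ and scalars $\alpha = \beta = \tfrac12$, since then $(\alpha w, \alpha) = (u, \tfrac12)$ and $(-\beta w, \beta) = (-u, \tfrac12)$.

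The only delicate point is that Proposition \ref{02} requires both members of the orthogonal pair to be nonzero, so the degenerate cases $(u, \alpha) = 0$ and $(-u, 1 - \alpha) = 0$ must be separated off and handled by hand rather than through Proposition \ref{02}; the remaining computations are direct substitutions.
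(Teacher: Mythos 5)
Your proposal is correct and follows essentially the same route as the paper: the main case is dispatched by Proposition \ref{02} (giving $\alpha = \Vert u \Vert = 1 - \alpha$, hence $\alpha = \Vert u \Vert = \tfrac12$), the degenerate cases where one member of the pair vanishes yield $0$ and $e$, and the converse inclusion is checked via the converse half of Proposition \ref{02}. The only cosmetic difference is that you split on whether $(u,\alpha)$ or $e - (u,\alpha)$ is zero, while the paper splits on $u = 0$ versus $u \ne 0$; both case analyses are exhaustive and lead to the same computations.
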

\begin{proof}
	Let $(u, \alpha) \in OP(V^{(\cdot)})$. If $u = 0$, then $(0, \alpha) \perp (0, 1 - \alpha)$ so that either $\alpha = 0$ or $\alpha = 1$. Now let $u \ne 0$. Then by Proposition \ref{02}, we have $\Vert u \Vert = \alpha = 1 - \alpha$. Thus $\alpha = \frac 12 = \Vert u \Vert$. 
	
%	Then $\vert (2 u, 2 \alpha - 1) \vert = (0, 1)$.
%	
%	If $\Vert u \Vert \le \alpha - \frac{1}{2}$, then $(0, 1) = (2 u, 2 \alpha - 1)$. Thus $u = 0$ and $\alpha = 1$ so that $(u, \alpha) = e$.
%	
%	If $\Vert u \Vert \le \frac{1}{2} - \alpha$, then $(0, 1) = (- 2 u, 1 - 2 \alpha)$. Thus $u = 0$ and $\alpha = 0$ so that $(u, \alpha) = (0, 0)$. 
%	
%	If $\left\vert \frac{1}{2} - \alpha \right\vert < \Vert u \Vert$, then $(0, 1) = \left( \frac{(2 \alpha - 1) u}{\Vert u \Vert}, 2 \Vert u \Vert \right)$. thus $\frac{(2 \alpha - 1) u}{\Vert u \Vert} = 0$ and $2 \Vert u \Vert = 1$ so that $\Vert u \Vert = \frac{1}{2} = \alpha$. 
	
	The converse can be proved in a routine way by using Proposition \ref{02}. 
\end{proof}
\begin{corollary}\label{04}
	Let $(u, \alpha), (v, \beta) \in V^{(\cdot)+} \setminus \lbrace 0 \rbrace$. Then $(u, \alpha) \perp (v, \beta)$ if and only if there exists a unique $p \in OP(V^{(\cdot)}) \setminus \lbrace 0, e \rbrace$ and $\lambda, \mu > 0$ such that $(u, \alpha) = \lambda p$ and $(v, \beta) = \mu (e - p)$.
\end{corollary}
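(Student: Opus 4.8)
The plan is to treat this corollary as a repackaging of Proposition \ref{02} in terms of the non-trivial order projections, so the strategy in both directions is to pass to the normalized description that Proposition \ref{02} provides. For the forward implication, suppose $(u, \alpha) \perp (v, \beta)$ with both vectors in $V^{(\cdot)+} \setminus \{0\}$. Proposition \ref{02} at once gives $\alpha = \Vert u \Vert > 0$, $\beta = \Vert v \Vert > 0$ and $\frac{u}{\Vert u \Vert} = - \frac{v}{\Vert v \Vert}$. Writing $w := \frac{u}{\Vert u \Vert}$, so that $\Vert w \Vert = 1$, I can recast the two vectors as $(u, \alpha) = \alpha(w, 1)$ and $(v, \beta) = \beta(-w, 1)$. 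The natural candidate for the projection is $p := \left(\frac{w}{2}, \frac 12\right)$: since $\left\Vert \frac{w}{2} \right\Vert = \frac 12$, the description of $OP(V^{(\cdot)})$ places $p$ in $OP(V^{(\cdot)}) \setminus \{0, e\}$, and $e - p = \left(-\frac{w}{2}, \frac 12\right)$. A direct coordinate check then yields $(u, \alpha) = 2\alpha\, p$ and $(v, \beta) = 2\beta\,(e - p)$, so one takes $\lambda = 2\alpha > 0$ and $\mu = 2\beta > 0$.

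For uniqueness I would read the scalars and the projection off the coordinates. Every admissible projection has the form $p = (x, \frac12)$ with $\Vert x \Vert = \frac12$, so the equation $(u, \alpha) = \lambda p = (\lambda x, \tfrac{\lambda}{2})$ forces $\lambda = 2\alpha$ and then $x = \frac{u}{2\alpha}$; hence both $\lambda$ and $p$ are already determined by $(u, \alpha)$ alone. With this $p$ fixed, the equation $(v, \beta) = \mu(e - p)$ similarly pins down $\mu = 2\beta$, giving uniqueness of the whole triple $(p, \lambda, \mu)$.

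For the converse, assume $(u, \alpha) = \lambda p$ and $(v, \beta) = \mu(e - p)$ with $p = (x, \frac12)$, $\Vert x \Vert = \frac12$, and $\lambda, \mu > 0$. Setting $w := 2x$, so $\Vert w \Vert = 1$, and matching the two coordinates gives $(u, \alpha) = (\alpha w, \alpha)$ and $(v, \beta) = (-\beta w, \beta)$. The converse half of Proposition \ref{02} then applies verbatim to the unit vector $w$ and yields $(u, \alpha) \perp (v, \beta)$.

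I expect the only genuinely delicate point to be the consistency built into the existence argument: I must confirm that the single projection $p$ manufactured from $(u, \alpha)$ is forced to be the same one representing $(v, \beta)$. This is not visible from the two equations taken in isolation, but it follows precisely because orthogonality (through Proposition \ref{02}) aligns the directions of $u$ and $v$ along a common unit vector $w$ with opposite signs; once that collinearity is in hand, everything else is bookkeeping on the two coordinates of $V^{(\cdot)} = V \times \mathbb{R}$.
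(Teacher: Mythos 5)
Your proof is correct and follows essentially the same route as the paper: invoke Proposition \ref{02} to align $u$ and $v$ along a common unit vector, set $p = \left(\frac{u}{2\Vert u \Vert}, \frac 12\right)$, $\lambda = 2\alpha$, $\mu = 2\beta$, and derive the converse directly from Proposition \ref{02}. Your explicit coordinate argument for uniqueness is a welcome addition that the paper leaves implicit.
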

\begin{proof}
	By Proposition \ref{02}, we have $\alpha = \Vert u \Vert > 0$, $\beta = \Vert v \Vert > 0$ and $\frac{u}{\Vert u \Vert} = - \frac{v}{\Vert v \Vert}$. We put $u_0 = \frac{u}{2 \Vert u \Vert}$, $p = \left( u_0, \frac{1}{2} \right)$, $\lambda = 2 \alpha$ and $\mu = 2 \beta$. Then $p \in OP(V^{(\cdot)})$ and we have $(u, \alpha) = \lambda p$ and $(v, \beta) = \mu (e - p)$. The converse follows directly from Proposition \ref{02}. 
\end{proof}
\begin{remark}\label{05}
	Let $(v, \alpha) \in V^{(\cdot)}$, $v \ne 0$. Then there exists unique $p \in OP(V^{(\cdot)})$ such that 
	$$(v, \alpha) = (\alpha + \Vert v \Vert) p + (\alpha - \Vert v \Vert) (e - p)$$ 
	in the following sense: When $v \ne 0$, we have $p = \left(\frac{v}{2 \Vert v \Vert}, \frac{1}{2}\right)$ and when $v = 0$, we have $p = e$. [The $v = 0$ case may appear undecided, if we notice that as $e = p + (e - p)$ for all $p \in OP(V^{(\cdot)})$ and as $v = 0$, any $p \in OP(V^{(\cdot)})$ works. But then, as the end form is the same, we formally consider the said form for definiteness.]
	
	Further, if $(v, \alpha) \in V^{(\cdot)+}$, then $\alpha + \Vert v \Vert \ge \alpha - \Vert v \Vert \ge 0$. 
	
	In general, for any $(v, \alpha)$, we have 
	$$\vert (v, \alpha) \vert = \vert \alpha + \Vert v \Vert \vert p + \vert \alpha - \Vert v \Vert \vert (e - p).$$
\end{remark}

\subsection{Strictly convex spaces} 
The following result is the backbone of the paper. 
\begin{lemma}\label{06l}
	$V$ is strictly convex if and only of the following condition holds: for $u, v \in V$ with $\Vert u \Vert = 1$, we have $v = \alpha u$ whenever $(0, 0) \le (v, \alpha) \le (u, 1)$ in $V^{(\cdot)+}$ for some $\alpha \in \R$.
\end{lemma}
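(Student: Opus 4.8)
The plan is to translate the order-interval condition into two norm inequalities and then recognize it as an equality case in the triangle inequality, which is precisely what strict convexity governs. Unwinding the definition of $V^{(\cdot)+}$, the hypothesis $(0,0) \le (v,\alpha) \le (u,1)$ means $(v,\alpha) \in V^{(\cdot)+}$ and $(u,1) - (v,\alpha) = (u-v,\, 1-\alpha) \in V^{(\cdot)+}$, that is, $\Vert v \Vert \le \alpha$ and $\Vert u - v \Vert \le 1 - \alpha$. Adding these gives $\Vert v \Vert + \Vert u - v \Vert \le 1 = \Vert u \Vert$, whereas the triangle inequality gives $\Vert u \Vert = \Vert v + (u-v) \Vert \le \Vert v \Vert + \Vert u - v \Vert$. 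Hence every inequality is forced to be an equality: $\Vert v \Vert = \alpha$, $\Vert u - v \Vert = 1 - \alpha$, and crucially $\Vert v \Vert + \Vert u - v \Vert = \Vert v + (u-v) \Vert$.

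For the forward implication I would assume $V$ strictly convex and invoke the standard equivalent formulation of strict convexity: if $\Vert x + y \Vert = \Vert x \Vert + \Vert y \Vert$ with $x, y \ne 0$, then $x = ty$ for some $t > 0$. Applying this with $x = v$ and $y = u - v$ in the generic case where both are nonzero yields $v = t(u-v)$, hence $v = \frac{t}{1+t} u$; comparing norms and using $\Vert v \Vert = \alpha$, $\Vert u \Vert = 1$ pins the scalar down to $\alpha$, so $v = \alpha u$. The two degenerate cases are immediate: if $v = 0$ then $\alpha = \Vert v \Vert = 0$ and $v = \alpha u$ trivially, while if $u - v = 0$ then $\alpha = 1 - \Vert u - v \Vert = 1$ and again $v = u = \alpha u$.

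For the converse I would argue by contraposition. If $V$ fails to be strictly convex, its definition supplies distinct unit vectors $a, b \in V$ whose midpoint lies on the unit sphere, that is, $\Vert a + b \Vert = 2 = \Vert a \Vert + \Vert b \Vert$; note $a, b$ are not positive multiples of one another, since unit vectors with $a = tb$, $t > 0$, force $a = b$. I then set $u = \frac{a+b}{\Vert a+b \Vert}$, $v = \frac{a}{\Vert a+b \Vert}$ and $\alpha = \frac{\Vert a \Vert}{\Vert a+b \Vert}$. These satisfy $\Vert u \Vert = 1$, $\Vert v \Vert = \alpha$, and $u - v = \frac{b}{\Vert a+b \Vert}$, so $\Vert u - v \Vert = \frac{\Vert b \Vert}{\Vert a+b \Vert} = 1 - \alpha$; thus $(0,0) \le (v,\alpha) \le (u,1)$ holds. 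A direct computation shows $v = \alpha u$ is equivalent to $\frac{a}{\Vert a \Vert} = \frac{b}{\Vert b \Vert}$, i.e. to $a = b$, which is excluded; hence $v \ne \alpha u$ and the stated condition fails.

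The two substantive steps are recognizing that the order interval collapses onto an equality in the triangle inequality (the conceptual heart of the argument) and producing the explicit triple $(u, v, \alpha)$ in the converse from a pair witnessing the failure of strict convexity. I expect the main obstacle to be this converse construction, specifically verifying cleanly that $v = \alpha u$ holds if and only if $a$ and $b$ are positively proportional, so that a failure of strict convexity is faithfully transported into a violation of the order condition. Everything else amounts to bookkeeping with the definition of $V^{(\cdot)+}$ and the classical strict-convexity dichotomy.
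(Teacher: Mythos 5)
Your proof is correct, and its overall skeleton matches the paper's: reduce the order interval to the norm identity $\Vert v \Vert + \Vert u - v \Vert = \Vert u \Vert$ with $\Vert v \Vert = \alpha$, invoke the equality case of the triangle inequality via strict convexity, and in the converse transport a flat spot on the unit sphere into the pair $\left(\frac12 a, \frac12\right) \le \left(\frac{a+b}{2}, 1\right)$ (the paper's $2w = u+v$ construction is literally the same witness). The one genuine difference is how the norm equalities are extracted in the forward direction: the paper constructs a norming functional $f$ with $f(u) = -1$ via Hahn--Banach, builds the state $g(w,\gamma) = f(w) + \gamma$ vanishing at $(u,1)$, and squeezes $\alpha = -f(v) \le \Vert v \Vert \le \alpha$; you instead simply add the two inequalities $\Vert v \Vert \le \alpha$ and $\Vert u - v \Vert \le 1 - \alpha$ and play them off against the triangle inequality. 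Your route is more elementary and shorter, requiring no duality; the paper's route has the side benefit of exhibiting an explicit state of $V^{(\cdot)}$ annihilating the order interval, an object of independent use in the order-theoretic setting, but for the lemma itself your argument is a clean simplification. Both proofs rely on the same standard reformulation of strict convexity (equality in the triangle inequality for nonzero vectors forces positive proportionality), so no new gap is introduced there.
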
 
\begin{proof}
	First, we assume that $V$ is strictly convex. Let $u, v \in V$ with $\Vert u \Vert = 1$ be such that $(0, 0) \le (v, \alpha) \le (u, 1)$ in $V^{(\cdot)+}$ for some $\alpha \in \R$. We show that $v = \alpha u$. By Hahn-Banach theorem, there exists $f \in V^*$ with $\Vert f \Vert = 1$ such that $f(u) = - 1$. We define $g: V^{(\cdot)} \in \R$ given by $g(w, \gamma) = f(w) + \gamma$ for all $w \in V$ and $\gamma \in \R$. Then $g$ is linear with $g(e) = 1$. If $(x, k) \in V^{(\cdot)+}$, then 
	$$k \ge \Vert x \Vert \ge \vert f(x) \vert \ge  - f(x)$$ 
	so that $g(x, k) = f(x) + k \ge 0$. Thus $g$ is a positive linear functional on $V^{(\cdot)}$. (In fact, $g$ is a state of $V^{(\cdot)}$.) Also, by construction, $g(u, 1) = 0$ so that $g(v, \alpha) = 0$ as $(0, 0) \le (v, \alpha) \le (u, 1)$. Thus we have $\alpha = - f(v) \le \Vert v \Vert \le \alpha$ as $(v, \alpha) \in V^{(\cdot)+}$. That is, $\Vert v \Vert = \alpha$. Again, as $(0, 0) \le (u - v, 1 - \alpha) \le (u, 1)$, we may also conclude that $\Vert u - v \Vert = 1 - \alpha$. Thus 
	$$\Vert v \Vert + \Vert u - v \Vert = 1 = \Vert u \Vert.$$ 
	If $v = 0$, then $\alpha = 0$ and we have $v = 0 u$. Similarly, if $v = u$, then $\alpha = 1$ and we have $v = 1 u$. So we may assume that $v \ne 0$ and $v \ne u$. Since $V$ is strictly convex, we must have $\frac{v}{\Vert v \Vert} = \frac{u - v}{\Vert u - v \Vert}$. Simplifying, we get $v = \alpha u$. 
	
	Next, we assume that $V$ is not strictly convex. Then we can find $u, v \in V$, $u \ne v$ with $\Vert u \Vert = 1 = \Vert v \Vert$ such that $\Vert u + v \Vert = 2$. Put $2 w = u + v$. Then $(\frac 12 u, \frac 12), (w, 1) - (\frac 12 u, \frac 12) = (\frac 12 v, \frac 12) \in V^{(\cdot)+}$ but $(\frac 12 u, \frac 12) \ne \frac 12 (w, 1)$. 
\end{proof}
\begin{corollary}\label{06}
	$V$ is strictly convex if and only of the following condition hold: for $(u, \alpha), (v, \beta), (w, \gamma) \in V^{(\cdot)+} \setminus \lbrace 0 \rbrace$ with $(u, \alpha) \perp (v, \beta)$ and $(w, \gamma) \le (v, \beta)$, we have $(u, \alpha) \perp (w, \gamma)$.
\end{corollary}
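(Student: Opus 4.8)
The plan is to deduce both implications from Lemma~\ref{06l} together with the rigid structural description of orthogonal pairs furnished by Proposition~\ref{02}. The key observation is that an orthogonal pair $(u,\alpha)\perp(v,\beta)$ of nonzero positive elements is essentially one-dimensional: by Proposition~\ref{02} both elements lie on the two rays determined by a single unit vector, so comparing a third element to one of them reduces to a problem on a line segment, to which Lemma~\ref{06l} applies after a rescaling.

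For the forward implication, I would suppose $V$ is strictly convex and take $(u,\alpha)\perp(v,\beta)$ in $V^{(\cdot)+}\setminus\{0\}$ with $(0,0)\le(w,\gamma)\le(v,\beta)$. Proposition~\ref{02} gives $\alpha=\Vert u\Vert>0$, $\beta=\Vert v\Vert>0$ and a unit vector $u_0:=\tfrac{u}{\Vert u\Vert}=-\tfrac{v}{\Vert v\Vert}$, so that $(v,\beta)=\beta(-u_0,1)$ with $\Vert-u_0\Vert=1$. Dividing the inequality $(0,0)\le(w,\gamma)\le\beta(-u_0,1)$ by $\beta>0$ puts it in exactly the form demanded by Lemma~\ref{06l} (with unit vector $-u_0$), and strict convexity then forces $w=-\gamma u_0$. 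Since $(w,\gamma)\ne0$, positivity gives $\gamma=\Vert w\Vert>0$ and $\tfrac{w}{\Vert w\Vert}=-u_0=-\tfrac{u}{\Vert u\Vert}$; feeding $(u,\alpha)=(\alpha u_0,\alpha)$ and $(w,\gamma)=(-\gamma u_0,\gamma)$ into the converse half of Proposition~\ref{02} then yields $(u,\alpha)\perp(w,\gamma)$, as required.

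For the converse, I would assume the stated condition and verify the criterion of Lemma~\ref{06l}. Fix $u\in V$ with $\Vert u\Vert=1$ and suppose $(0,0)\le(v,\alpha)\le(u,1)$. The converse part of Proposition~\ref{02} (with the unit vector $u$ and scalars $1,1$) shows $(-u,1)\perp(u,1)$, and both are nonzero elements of $V^{(\cdot)+}$. If $(v,\alpha)=0$ then $v=0=0\cdot u$ trivially; otherwise $(v,\alpha)\in V^{(\cdot)+}\setminus\{0\}$ satisfies $(v,\alpha)\le(u,1)$, so the assumed condition applied to the orthogonal pair $(-u,1)\perp(u,1)$ yields $(-u,1)\perp(v,\alpha)$. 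Proposition~\ref{02} then forces $\alpha=\Vert v\Vert$ and $\tfrac{v}{\Vert v\Vert}=u$, i.e.\ $v=\alpha u$. Thus the hypothesis of Lemma~\ref{06l} is met and $V$ is strictly convex.

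The bulk of the argument is routine algebra; the points requiring care are the degenerate cases ($\gamma=0$ or $(v,\alpha)=0$, where positivity collapses an element to $0$) and the bookkeeping of signs, since Proposition~\ref{02} pairs $\alpha u$ with $-\beta u$ and one must track whether the relevant unit vector is $u_0$ or $-u_0$ so that the hypotheses of Lemma~\ref{06l} and the converse of Proposition~\ref{02} align. The only genuinely substantive step is the rescaling in the forward direction that converts the order-interval hypothesis into the normalized form $(0,0)\le(\,\cdot\,)\le(\text{unit vector},1)$ required by Lemma~\ref{06l}; once that normalization is arranged, strict convexity does all the work.
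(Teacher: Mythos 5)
Your proposal is correct and follows essentially the same route as the paper: both directions rest on Proposition~\ref{02} to rigidify orthogonal pairs and on Lemma~\ref{06l} to import strict convexity, with your converse being merely the contrapositive phrasing of the paper's (the paper exhibits $(-u,1)\perp(u,1)$ but $(-u,1)\not\perp(v,\alpha)$ when strict convexity fails, while you run the same computation forwards). Your explicit rescaling of $(0,0)\le(w,\gamma)\le\beta(-u_0,1)$ by $\beta$ just spells out a step the paper leaves implicit.
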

\begin{proof}
	First, we assume that $V$ is strictly convex. Let $(u, \alpha), (v, \beta), (w, \gamma) \in V^{(\cdot)+} \setminus \lbrace 0 \rbrace$ such that $(u, \alpha) \perp (v, \beta)$ and $(w, \gamma) \le (v, \beta)$. Then by Proposition \ref{02}, we have $\alpha = \Vert u \Vert > 0$, $\beta = \Vert v \Vert > 0$ and $\frac{u}{\Vert u \Vert} = - \frac{v}{\Vert v \Vert}$. Also, then by Lemma \ref{06l}, we have $\frac{w}{\Vert w \Vert} = \frac{v}{\Vert v \Vert} = - \frac{u}{\Vert u \Vert}$. Hence $(u, \alpha) \perp (w, \gamma)$ by Proposition \ref{02}.

	Conversely, assume that $V$ is not strictly convex. Then by Lemma \ref{06l}, we can find $u, v \in V$ with $\Vert u \Vert = 1$ and $\alpha \in \R$ such that $(0, 0) \le (v, \alpha) \le (u, 1)$ in $V^{(\cdot)+}$ but $v \ne \alpha u$. Thus by Proposition \ref{02}, $( - u, 1) \perp (u, 1)$ but $(- u, 1) \not\perp (v, \alpha)$. 
\end{proof} 
Now Proposition \ref{01}, Corollary \ref{03} and Theorem \ref{06} assimilate into the following: 
\begin{theorem}\label{07}
	Let $V$ be a real normed linear space. Consider $V^{(\cdot)} := V \times \mathbb{R}$ and put $V^{(\cdot)+} := \lbrace (v, \alpha): \Vert v \Vert \le \alpha \rbrace$. Then $(V^{(\cdot)}, V^{(\cdot)+})$ becomes a real ordered space. For $(v, \alpha) \in V^{(\cdot)}$, we define 
	\begin{displaymath}
	\vert (v, \alpha) \vert = \begin{cases}
	(v, \alpha), &\textrm{if} ~ (v, \alpha) \in V^{(\cdot)+} \\
	- (v, \alpha), &\textrm{if} ~ (v, \alpha) \in - V^{(\cdot)+} \\
	\left(\frac{\alpha}{\Vert v \Vert}v, \Vert v \Vert \right), &\textrm{if} ~ (v, \alpha) \notin V^{(\cdot)+} \bigcup - V^{(\cdot)+}.
	\end{cases}
	\end{displaymath}
	Then $(V^{(\cdot)}, V^{(\cdot)+}, \vert \cdot \vert)$ is an absolutely ordered space if and only if $V$ is strictly convex. 
\end{theorem}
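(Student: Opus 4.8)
The plan is to observe that the five axioms defining an absolutely ordered space have, one at a time, already been secured (or can be immediately reduced to) the results proved earlier in this section, so the theorem is essentially an assembly with some bookkeeping. First I would record that axioms (1), (2), (3) in the definition of an absolutely ordered space are verbatim the three assertions of Proposition \ref{01}; these hold for $(V^{(\cdot)}, V^{(\cdot)+}, \vert\cdot\vert)$ with no hypothesis on $V$ beyond being a normed linear space. The entire force of the ``if and only if'' will therefore rest on axioms (4) and (5), both of which are statements about the orthogonality relation $\perp$: axiom (4) demands that $(u,\alpha)\perp(v,\beta)$ and $(u,\alpha)\perp(w,\gamma)$ imply $(u,\alpha)\perp\vert(v,\beta)\pm(w,\gamma)\vert$, while axiom (5) demands that $(u,\alpha)\perp(v,\beta)$ together with $0\le(w,\gamma)\le(v,\beta)$ imply $(u,\alpha)\perp(w,\gamma)$.

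For the forward implication, assume $V$ is strictly convex. Axiom (4) I would obtain from Corollary \ref{03}, and the conclusion of axiom (5) from the strict-convexity half of Corollary \ref{06}. It is worth noting that Corollary \ref{03} needs no convexity assumption at all, so axioms (1)--(4) hold unconditionally; strict convexity enters only through axiom (5). The one technical wrinkle is that Corollaries \ref{03} and \ref{06} are stated for triples lying in $V^{(\cdot)+}\setminus\{0\}$, so I must dispose of the degenerate cases where one of the entries vanishes.

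These degenerate cases are dispatched by the orthogonality-with-zero observations in the Remark following Proposition \ref{02}. Since $(u,\alpha)\perp(v,\beta)$ forces both entries into $V^{(\cdot)+}$, and since $0\perp x$ for every $x\in V^{(\cdot)+}$ (because $\vert{-x}\vert=\vert x\vert=x$ by Proposition \ref{01}), each possibility collapses to a triviality: for axiom (4), if $(v,\beta)=0$ then $\vert(v,\beta)\pm(w,\gamma)\vert=(w,\gamma)$ and the desired conclusion is just the hypothesis $(u,\alpha)\perp(w,\gamma)$, while if $(u,\alpha)=0$ the conclusion is $0\perp\vert(v,\beta)\pm(w,\gamma)\vert$, which holds as the argument lies in $V^{(\cdot)+}$; for axiom (5), if $(v,\beta)=0$ then $0\le(w,\gamma)\le 0$ forces $(w,\gamma)=0$, and if $(u,\alpha)=0$ or $(w,\gamma)=0$ the claim is again $0\perp(\cdot)$. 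With the zero cases cleared, Corollaries \ref{03} and \ref{06} furnish axioms (4) and (5), completing the forward direction.

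For the converse I would argue contrapositively through the second half of Corollary \ref{06}: if $V$ is not strictly convex, that corollary produces $(u,\alpha),(v,\beta),(w,\gamma)\in V^{(\cdot)+}\setminus\{0\}$ with $(u,\alpha)\perp(v,\beta)$ and $(w,\gamma)\le(v,\beta)$ but $(u,\alpha)\not\perp(w,\gamma)$, which is precisely a violation of axiom (5); hence $(V^{(\cdot)}, V^{(\cdot)+}, \vert\cdot\vert)$ fails to be an absolutely ordered space. I do not anticipate any genuine obstacle here, as all the mathematical content is already carried by the preceding results; the only point needing attention is the careful tracking of the degenerate zero cases, since the two corollaries that do the real work are stated only for nonzero elements.
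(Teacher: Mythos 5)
Your proposal is correct and follows exactly the route the paper intends: the theorem is stated as an assimilation of Proposition \ref{01} (axioms (1)--(3)), Corollary \ref{03} (axiom (4)), and Corollary \ref{06} (axiom (5) and, via its converse half, the failure of axiom (5) when $V$ is not strictly convex). Your explicit handling of the degenerate zero cases is a welcome bit of bookkeeping that the paper leaves implicit, but it does not change the argument.
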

\begin{remark}
	Given a normed linear space $V$, we say that the absolutely ordered space $(V^{(\cdot)}, V^{(\cdot)+}, \vert \cdot \vert)$ is obtained by adjoining an order unit to $V$. 
\end{remark}

%Now onwards, we shall assume that $V$ is a strictly convex normed linear space unless stated otherwise. In the remaining part of this section, we describe the order unit norm on $V^{(\cdot)}$. For this, we begin with the following notions. 

Let $(U, U^+, \Vert \cdot \Vert)$ be a normed ordered linear space. Then a pair of positive elements $u, v \in U^+$ is said to be \emph{absolutely $\infty$-orthogonal} (we write, $u \perp_{\infty}^a v$), $1 \le t \le \infty$, if for $0 \le u_1 \le u$ and $0 \le v_1 \le v$ we have 
$$\Vert u_1 + k v_1 \Vert = \max \left( \Vert u_1 \Vert, \Vert k v_1 \Vert \right)$$
for all $k \in \mathbb{R}$ (that is, $u_1$ is $\infty$-orthogonal to $v_1$ (we write,$u_1 \perp_{\infty} v_1$)) \cite{K16}. 
\begin{definition}\cite{K16, K18}
	Let $(U, U^+, \vert \cdot \vert)$ be an absolutely ordered space and let $\Vert \cdot \Vert$ be a norm on $V$. Then $(U, U^+, \vert \cdot \vert, \Vert \cdot \Vert)$ is said to be an \emph{absolute order smooth $\infty$-normed space}, if it satisfies the following conditions:
	\begin{enumerate}
		\item[$(O. \infty. 1)$:] For $u \le v \le w$ in $U$, we have $\Vert v \Vert = \max \left( \Vert u \Vert, \Vert w \Vert \right)$; 
		\item[$(O. \perp_{\infty}. 1)$:] if $u, v \in U^+$ with $u \perp v$, then $u \perp_{\infty}^a v$; and
		\item[$(O. \perp_{\infty}. 2)$:] if $u, v \in U^+$ with $u \perp_{\infty}^a v$, then $u \perp v$.
	\end{enumerate}
	An absolute order smooth $\infty$-normed space $(U, U^+, \vert \cdot \vert, \Vert \cdot \Vert)$ is said to be an \emph{absolute order unit space}, if there exists a order unit $e \in U^+$ for $U$ which determine $\Vert \cdot \Vert$ as an order unit norm.
\end{definition}
\begin{theorem}\label{10a}
	Let $V$ be a strictly convex real normed linear space. Then $(V^{(\cdot)}, e)$ is an absolute order unit space.
\end{theorem}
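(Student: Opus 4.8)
The plan is to equip $V^{(\cdot)}$ with the order unit norm $\Vert (v,\alpha)\Vert = \Vert v\Vert + \vert\alpha\vert$ determined by $e=(0,1)$ (computed in the introduction) and to verify the three axioms of an absolute order smooth $\infty$-normed space; since this norm is an order unit norm by construction, the conclusion that $(V^{(\cdot)},e)$ is an absolute order unit space follows at once. That $(V^{(\cdot)}, V^{(\cdot)+}, \vert\cdot\vert)$ is an absolutely ordered space is precisely Theorem \ref{07}, available because $V$ is strictly convex. The axiom $(O.\infty.1)$ is the standard domination property of an order unit norm: from $u \le v \le w$ one gets $-\Vert u\Vert e \le u \le v \le w \le \Vert w\Vert e$, hence $\Vert v\Vert \le \max(\Vert u\Vert,\Vert w\Vert)$, with no use of strict convexity. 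The content of the proof therefore lies in the two orthogonality axioms.

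For $(O.\perp_\infty.1)$ I would take an orthogonal pair $(u,\alpha)\perp(v,\beta)$ with both in $V^{(\cdot)+}\setminus\{0\}$ and use Corollary \ref{04} to write $(u,\alpha)=\lambda p$, $(v,\beta)=\mu(e-p)$ for a single order projection $p=(u_0,\tfrac12)$ with $\Vert u_0\Vert=\tfrac12$ and $\lambda,\mu>0$; put $\hat u = 2u_0$, a unit vector. The decisive reduction is that every $0\le u_1\le\lambda p$ is a nonnegative multiple of $p$: scaling the inequality $0\le u_1\le\lambda p=(\tfrac\lambda2\hat u,\tfrac\lambda2)$ by $2/\lambda$ and applying Lemma \ref{06l} forces $u_1=2s_1p$ for some $s_1\ge0$, and similarly each $0\le v_1\le\mu(e-p)$ equals $2s_2(e-p)$ with $s_2\ge0$. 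Since $p=(\tfrac12\hat u,\tfrac12)$ and $e-p=(-\tfrac12\hat u,\tfrac12)$, for every $k\in\R$ one has $u_1+kv_1=((s_1-ks_2)\hat u,\,s_1+ks_2)$, so that $\Vert u_1+kv_1\Vert=\vert s_1-ks_2\vert+\vert s_1+ks_2\vert=2\max(s_1,\vert k\vert s_2)=\max(\Vert u_1\Vert,\Vert kv_1\Vert)$, using $\Vert p\Vert=\Vert e-p\Vert=1$. Thus $u_1\perp_\infty v_1$ for all admissible $u_1,v_1,k$, i.e. $(u,\alpha)\perp_\infty^a(v,\beta)$.

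The main obstacle is the converse axiom $(O.\perp_\infty.2)$, which I would establish by contraposition: given $(u,\alpha),(v,\beta)\in V^{(\cdot)+}\setminus\{0\}$ that are not orthogonal, I will exhibit $u_1,v_1,k$ breaking the $\infty$-orthogonality equation. By Proposition \ref{02} non-orthogonality splits into two cases. If some element, say $(u,\alpha)$, is interior to the cone, $\Vert u\Vert<\alpha$, then $u_1:=(0,\delta)$ with $0<\delta\le\alpha-\Vert u\Vert$ satisfies $0\le u_1\le(u,\alpha)$; taking $v_1:=(v,\beta)$, for small $k>0$ we get $\Vert u_1+kv_1\Vert=\delta+k(\Vert v\Vert+\beta)$, which strictly exceeds $\max(\delta,k(\Vert v\Vert+\beta))=\max(\Vert u_1\Vert,\Vert kv_1\Vert)$, so $\infty$-orthogonality fails. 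In the remaining case both elements lie on the boundary ($\alpha=\Vert u\Vert$, $\beta=\Vert v\Vert$) but $\tfrac{u}{\Vert u\Vert}\ne-\tfrac{v}{\Vert v\Vert}$; here I take $u_1=(u,\Vert u\Vert)$, $v_1=(v,\Vert v\Vert)$ and $k=-\Vert u\Vert/\Vert v\Vert$, so that the second coordinate of $u_1+kv_1$ cancels and $\Vert u_1+kv_1\Vert=\Vert u\Vert\,\bigl\Vert\tfrac{u}{\Vert u\Vert}-\tfrac{v}{\Vert v\Vert}\bigr\Vert$, whereas $\max(\Vert u_1\Vert,\Vert kv_1\Vert)=2\Vert u\Vert$. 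Strict convexity of $V$ now forces $\bigl\Vert\tfrac{u}{\Vert u\Vert}-\tfrac{v}{\Vert v\Vert}\bigr\Vert<2$ (the value $2$ being attained by unit vectors only in the antipodal case), so the two sides differ and $\infty$-orthogonality fails here too. This boundary, non-antipodal case is the delicate point where strict convexity is indispensable — it also enters, via Lemma \ref{06l}, in the reduction step of $(O.\perp_\infty.1)$ — and assembling the three verified axioms with the fact that $\Vert\cdot\Vert$ is the order unit norm for $e$ completes the proof.
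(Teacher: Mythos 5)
Your proposal is correct and follows essentially the same route as the paper: Theorem \ref{07} for the absolutely ordered structure, the order unit norm for $(O.\infty.1)$, and for $(O.\perp_{\infty}.1)$ the identical reduction via Corollary \ref{04} and Lemma \ref{06l} to multiples of $p$ and $e-p$ (you verify the max-identity by the elementary computation $\vert s_1-ks_2\vert+\vert s_1+ks_2\vert=2\max(s_1,\vert k\vert s_2)$ where the paper cites \cite[Theorem 3.3]{K14}). The only real variation is in $(O.\perp_{\infty}.2)$, which you argue contrapositively and, in the boundary non-antipodal case, close with $k=-\Vert u\Vert/\Vert v\Vert$ plus the strict-convexity fact $\Vert \tfrac{u}{\Vert u\Vert}-\tfrac{v}{\Vert v\Vert}\Vert<2$, whereas the paper normalizes, takes $k=1$, and gets $\Vert u+v\Vert=0$ directly without invoking strict convexity at that step; both are valid.
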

\begin{proof}
	By Theorem \ref{07}, $V^{(\cdot)}$ is an absolutely ordered space. Also, $(V^{(\cdot)}, e)$ is an order unit space so that it satisfies $(O. \infty. 1)$.
	
	Next, we show that for $(u, \alpha), (v, \beta) \in V^{(\cdot)+}$, $(u, \alpha) \perp (v, \beta)$ implies $(u, \alpha) \perp_{\infty}^a (v, \beta)$. As $(0, 0) \perp_{\infty}^a (w, \gamma)$ for all $(w, \gamma) \in V^{(\cdot)+}$, we assume that $(u, \alpha) \ne (0, 0)$ and $(v, \beta) \ne (0, 0)$. Since $(u, \alpha) \perp (v, \beta)$, by Corollary \ref{04}, we have $(u, \alpha) = 2 \alpha (u_0, \frac{1}{2})$ and $(v, \beta) = 2 \beta (- u_0, \frac{1}{2})$ where $u_0 = \frac{u}{2 \Vert u \Vert}$. Let $(0, 0) \le (u_1, \alpha_1) \le (u, \alpha)$ and $(0, 0) \le (v_1, \beta_1) \le (v, \beta)$. Then as in the proof of Proposition \ref{06}, we have $(u_1, \alpha_1) = 2 \alpha_1 (u_0, \frac{1}{2})$ and $(v_1, \beta_1) = 2 \beta_1 (- u_0, \frac{1}{2})$. Thus in order to prove that $(u, \alpha) \perp_{\infty}^a (v, \beta)$, it suffices to prove that $(u_0, \frac{1}{2}) \perp_{\infty} (- u_0, \frac{1}{2})$. In this case, by definition, we have $\Vert (u_0, \frac{1}{2}) \Vert_{\infty} = 1 = \Vert (- u_0, \frac{1}{2}) \Vert_{\infty}$ and $\Vert (u_0, \frac{1}{2}) +  (- u_0, \frac{1}{2}) \Vert_{\infty} = 1$. Thus by \cite[Theorem 3.3]{K14}, we have $(u_0, \frac{1}{2}) \perp_{\infty} (- u_0, \frac{1}{2})$ so that $(u, \alpha) \perp_{\infty}^a (v, \beta)$. 
	
	Finally, we show that for $(u, \alpha), (v, \beta) \in V^{(\cdot)+}$, $(u, \alpha) \perp_{\infty}^a (v, \beta)$ implies $(u, \alpha) \perp (v, \beta)$. Let $(u, \alpha), (v, \beta) \in V^{(\cdot)+}$ with $(u, \alpha) \perp_t^a (v, \beta)$. Without any loss of generality, we assume that $(u, \alpha) \ne (0, 0)$ and $(v, \beta) \ne (0, 0)$. Also, as $\lambda (u, \alpha) \perp_t^a \mu (v, \beta)$ for any $\lambda, \mu \ge 0$, we further assume that $\Vert (u, \alpha) \Vert_0 = \alpha + \Vert u \Vert = 1 = \Vert (v, \beta) \Vert_0 = \beta + \Vert v \Vert$. Since $(u, \alpha), (v, \beta) \in V^{(\cdot)+}$, we have $\Vert u \Vert \le \alpha$ and $\Vert v \Vert \le \beta$. We show that $\Vert u \Vert = \alpha$ and $\Vert v \Vert = \beta$. Assume, to the contrary that $\Vert u \Vert < \alpha$. Then as $(0, 0) \le (u, \Vert u \Vert)$, we get that $(0, 0) \le (0, \alpha - \Vert u \Vert) \le (u, \alpha)$. Thus  $(0, \alpha - \Vert u \Vert) \perp_{\infty} (v, \beta)$ and consequently, $(0, 1) \perp_{\infty} (v, \beta)$. Thus 
	$$1 = \Vert (0, 1) + (v, \beta) \Vert_0 = \Vert v \Vert + \beta + 1 = 2$$
	which is an absurd. Thus $\Vert u \Vert = \alpha = \frac 12$. Similarly,  $\Vert v \Vert = \beta = \frac 12$. Thus as $(u, \alpha) \perp_{\infty} (v, \beta)$, we get 
	$$1 = \Vert (u, \alpha) + (v, \beta) \Vert_0 = \Vert u + v \Vert + 1$$
	and consequently, $v = - u$. Hence $(u, \alpha) = (u, \frac 12) \perp (- u, \frac 12) = (v, \beta)$. This completes the proof.
\end{proof}

\section{Spectral properties of $V^{(\infty)}$}

Recall that in an absolute order unit space $U$, the set of order projections $OP(U)$ is said to \emph{cover $U$ absolutely}, if for each $u \in U$, the exists a unique $p \in OP(U)$ such that $\vert u \vert \le \Vert u \Vert p$ and whenever there exists $q \in OP(U)$ with $\vert u \vert \le \Vert u \Vert q$, then $p \le q$. In this case, $p$ is said to be the \emph{ absolute cover} of $u$ (\cite[Definition 7.1]{K18}). 

Let $(U, e)$ be an absolute order unit space in which $OP(U)$ covers $U$. Given $p \in OP(U)$, the subspace 
$$U_p = \lbrace u \in U: \vert u \vert \le \lambda p ~ \textrm{for some} ~ \lambda > 0 \rbrace$$
of $U$ is an absolute ordered subspace of $U$ such that $(U_p, p)$ is an absolute order unit space. We write $AC(p) := U_p + U_{p'}$ and $AC(p_i; i \in I) := \cap_{i \in I} AC(p_i)$ for any subset $\lbrace p_i: i \in I \rbrace \subset OP(U)$.  Further $c_p^{\pm}(v, \alpha)$ denotes the absolute cover of $(v - \alpha p)^{\pm}$. The following two results generalize the notion of spectral family of a self-adjoint element of a von Neumann algebra.
\begin{theorem}[Spectral Resolution]\label{53} \cite[Theorem 7.10]{K18}
	Let $(U, e)$ be a monotone complete absolute order unit space in which $OP(U)$ covers $U$. Then for every $u \in U$, there exists a unique family $\{ e_{\alpha}: \alpha \in \mathbb{R} \} \subset OP(U)$ such that
	\begin{enumerate}
		\item $\{ e_{\alpha}: \alpha \in \mathbb{R} \}$ is increasing;
		\item $e_{\alpha} = 0$ if $\alpha < - \Vert u \Vert$ and $e_{\alpha} = e$ if $\alpha \ge \Vert u \Vert$;
		\item $u \in AC(e_{\alpha}; \alpha \in \mathbb{R})$;
		\item $C_{e_{\alpha}}(u) \le \alpha e_{\alpha}$ and $C_{e_{\alpha}}^{\prime}(u) \ge \alpha e_{\alpha}^{\prime}$ for each $\alpha \in \mathbb{R}$;
		\item If $u \in AC(p)$ for some $p \in OP(U)$ and if $C_p(u) \le \alpha p$ and $C_p^{\prime}(u) \ge \alpha p^{\prime}$, then $p \le e_{\alpha}$; 
		\item $\wedge_{\alpha > \alpha_0} e_{\alpha} = e_{\alpha_0}$ for each $\alpha_0 \in \mathbb{R}$.
	\end{enumerate}
	In this case, the family of order projections $\{ e_{\alpha}: \alpha \in \mathbb{R} \}$ is called the spectral family of $u$.
\end{theorem}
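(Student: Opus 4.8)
The plan is to build the family $\{e_\alpha\}$ explicitly from absolute covers of positive parts and then check the six properties, letting uniqueness fall out of (4) and (5). Fixing $u \in U$, for each $\alpha \in \mathbb{R}$ I would form $u - \alpha e$ and its positive part $(u-\alpha e)^+ = \tfrac12(\vert u - \alpha e\vert + (u - \alpha e))$. Because $OP(U)$ covers $U$, the absolute cover $c_e^+(u,\alpha)$ of $(u-\alpha e)^+$ exists, and I would set
$$e_\alpha := e - c_e^+(u,\alpha).$$
In the von Neumann model this is precisely the spectral projection $\chi_{(-\infty,\alpha]}(u)$, so each property below is the abstract shadow of a standard fact.

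I would then clear the structural items. For monotonicity (1): if $\alpha \le \beta$ then $(u-\beta e)^+ \le (u-\alpha e)^+$, and since the absolute cover is monotone on positive elements, $c_e^+(u,\beta) \le c_e^+(u,\alpha)$, so $e_\alpha \le e_\beta$ after complementing. For the boundary values (2): if $\alpha \ge \Vert u \Vert$ then $u - \alpha e \le 0$, its positive part is $0$, and $e_\alpha = e$; if $\alpha < -\Vert u \Vert$ then $u - \alpha e \ge c\,e$ with $c = -(\Vert u \Vert + \alpha) > 0$, and the observation that no order projection $q \ne e$ can satisfy $e \le \lambda q$ (for then every state gives $\Vert e - q\Vert < 1$, forcing the order projection $e-q$ to vanish) shows $c_e^+(u,\alpha) = e$, hence $e_\alpha = 0$. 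Compatibility (3) would follow from noticing that $u - \alpha e$ decomposes orthogonally as $(u-\alpha e)^+ - (u-\alpha e)^-$ with supports under $e_\alpha'$ and $e_\alpha$ respectively, so $u - \alpha e$, and therefore $u$, lies in $AC(e_\alpha)$ for every $\alpha$.

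The heart of the argument is the pair (4)--(5). For (4) I would compress $u - \alpha e$ to the corner $e_\alpha$: the positive part of $u - \alpha e$ is supported by $c_e^+(u,\alpha) = e_\alpha'$, so its compression by $e_\alpha$ vanishes and $C_{e_\alpha}(u - \alpha e) = -C_{e_\alpha}((u-\alpha e)^-) \le 0$, i.e. $C_{e_\alpha}(u) \le \alpha e_\alpha$; dually $C_{e_\alpha}'(u - \alpha e) = (u-\alpha e)^+ \ge 0$ gives $C_{e_\alpha}'(u) \ge \alpha e_\alpha'$. For the maximality (5), given $p$ with $u \in AC(p)$, $C_p(u) \le \alpha p$ and $C_p'(u) \ge \alpha p'$, I would rewrite these as $C_p(u-\alpha e) \le 0$ and $C_{p'}(u-\alpha e) \ge 0$; since $u - \alpha e$ splits across $p$, this is its orthogonal decomposition, so $(u-\alpha e)^+ = C_{p'}(u - \alpha e)$ is supported under $p'$, and minimality of the absolute cover yields $c_e^+(u,\alpha) \le p'$, i.e. $p \le e_\alpha$. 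Thus $e_\alpha$ is the \emph{largest} order projection meeting the two compression inequalities, which forces uniqueness of the entire family.

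The genuinely delicate step is right-continuity (6). Putting $q := \wedge_{\alpha > \alpha_0} e_\alpha$, which exists by monotone completeness, the inequality $e_{\alpha_0} \le q$ is immediate from (1). For the reverse I would push the inequalities of (4) through the decreasing net $\alpha \downarrow \alpha_0$: first establishing $u \in AC(q)$ (compatibility passing to the monotone infimum of the $e_\alpha$), then showing $C_{e_\alpha}(u) \to C_q(u)$ and $C_{e_\alpha}'(u) \to C_q'(u)$ along the net, so that $C_{e_\alpha}(u) \le \alpha e_\alpha$ and $C_{e_\alpha}'(u) \ge \alpha e_\alpha'$ pass to $C_q(u) \le \alpha_0 q$ and $C_q'(u) \ge \alpha_0 q'$; maximality (5) at $\alpha_0$ then gives $q \le e_{\alpha_0}$ and hence equality. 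I expect this limiting step to be the main obstacle: justifying that the compression maps and the order-unit order behave continuously along monotone nets, so the defining inequalities survive in the limit, is exactly where monotone completeness is indispensable and where the abstract setting must substitute for the functional calculus available in the operator-algebra proof.
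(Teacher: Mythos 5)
This theorem is not proved in the paper at all: it is imported verbatim from the author's earlier work (\cite[Theorem 7.10]{K18}), so there is no in-paper proof to compare your argument against. That said, your construction $e_\alpha = e - c_e^+(u,\alpha)$, with $c_e^+(u,\alpha)$ the absolute cover of $(u-\alpha e)^+$, is exactly the one the cited source uses --- the paper itself relies on the identity $e_{\lambda}(v, \alpha) = e - C_{\lambda}^{+}(v, \alpha)$ in the proof of Proposition \ref{s2} --- so your starting point and overall architecture are the right ones, and the verification of (2), (3), (4), (5) follows the standard pattern one would expect.

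The substantive reservation is that several steps you assert in passing are precisely where the real work of \cite[Theorem 7.10]{K18} lives, and in this abstract setting none of them is free. For (1) you need both that $\alpha \le \beta$ implies $(u-\beta e)^+ \le (u-\alpha e)^+$ and that the absolute cover is monotone on positive elements; the first is immediate in a JB-algebra via functional calculus but in an absolutely ordered space it must be extracted from the axioms on $\vert\cdot\vert$, and the second is a separate lemma about covers. For (4) and (5) you invoke a calculus of compressions $C_p$ (that $C_p$ kills elements supported under $p'$, that the splitting of $u-\alpha e$ across $p$ with the stated sign conditions identifies the orthogonal decomposition $(u-\alpha e)^+ - (u-\alpha e)^-$, and that minimality of the cover then transfers to an inequality of projections); each of these is a proposition in its own right. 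Finally, for (6) you correctly identify the passage of $C_{e_\alpha}(u) \le \alpha e_\alpha$ to the infimum $q$ as the crux, but "showing $C_{e_\alpha}(u) \to C_q(u)$ along the net" is stated as a hope rather than argued --- order-continuity of compressions along monotone nets of order projections is exactly the point where monotone completeness must be deployed, and without at least a sketch of why compressions respect monotone infima the proof of (6) is incomplete. So: right construction, correct skeleton, but the proposal defers essentially all of the technical content to unproved auxiliary facts.
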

\begin{theorem}[Spectral Decomposition]\label{54} \cite[Theorem 7.12]{K18}
	Let $(V, e)$ be a monotone complete absolute order unit space in which $OP(V)$ covers $V$ and let $v \in V$.
	Consider the spectral resolution $\{ e_{\alpha} : \alpha \in \mathbb{R} \}$ of $v$ in $OP(V)$. Then for any $\epsilon > 0$ 
	and a finite increasing sequence $\alpha_0  < \dots < \alpha_n$ with $\alpha_0 < - \Vert v \Vert$,$\alpha_n > \Vert v \Vert$ and 
	$\max \{ \alpha_i - \alpha_{i-1} : 1 \le i \le n \} < \epsilon$, we have 
	$$\big\Vert v - \sum_{i=1}^n \alpha_i (e_{\alpha_i} - e_{\alpha_{i-1}}) \big\Vert < \epsilon .$$
\end{theorem}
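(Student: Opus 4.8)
The plan is to realize $v$ as a finite orthogonal sum of its spectral ``bands'' and then to estimate $v$ band by band, exploiting that the order unit norm behaves like an $\infty$-norm across an orthogonal family. Set $p_i := e_{\alpha_i} - e_{\alpha_{i-1}}$ for $1 \le i \le n$. Since $\{ e_\alpha \}$ is increasing by property (1) of Theorem \ref{53}, and the difference of two comparable order projections is again an order projection, each $p_i \in OP(V)$ and the $p_i$ are mutually orthogonal. Because $\alpha_0 < -\Vert v \Vert$ and $\alpha_n > \Vert v \Vert$, property (2) gives $e_{\alpha_0} = 0$ and $e_{\alpha_n} = e$, whence $\sum_{i=1}^n p_i = e_{\alpha_n} - e_{\alpha_0} = e$. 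Thus $\{ p_i \}$ is a finite orthogonal resolution of the unit.

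First I would show that $v$ decomposes compatibly along this resolution. Since $v \in AC(e_\alpha; \alpha \in \R)$ by property (3), $v$ lies in $AC(p_i)$ for each $i$, and therefore $v = \sum_{i=1}^n C_{p_i}(v)$ with each $C_{p_i}(v) \in V_{p_i}$. The heart of the argument is the band estimate
$$\alpha_{i-1}\, p_i \le C_{p_i}(v) \le \alpha_i\, p_i \qquad (1 \le i \le n).$$
The upper bound should come from property (4), namely $C_{e_{\alpha_i}}(v) \le \alpha_i e_{\alpha_i}$, restricted to the sub-band $p_i \le e_{\alpha_i}$; the lower bound should come from $C_{e_{\alpha_{i-1}}}^{\prime}(v) \ge \alpha_{i-1} e_{\alpha_{i-1}}^{\prime}$, restricted to $p_i \le e - e_{\alpha_{i-1}} = e_{\alpha_{i-1}}^{\prime}$. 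Making these restrictions compose correctly --- i.e. verifying that compressing the $e_{\alpha_i}$-compression of $v$ further to $p_i$ returns precisely $C_{p_i}(v)$ --- is the step I expect to be the main obstacle, and it is exactly where the structural results of \cite{K18} on compressions and on the subspaces $AC(p)$ must be brought to bear.

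Granting the band estimate, the remainder is bookkeeping. From $\alpha_{i-1} p_i \le C_{p_i}(v) \le \alpha_i p_i$ we obtain $0 \le \alpha_i p_i - C_{p_i}(v) \le (\alpha_i - \alpha_{i-1}) p_i$, and applying the monotonicity packaged in $(O.\infty.1)$ together with $\Vert p_i \Vert \le 1$ yields
$$\Vert C_{p_i}(v) - \alpha_i p_i \Vert \le \alpha_i - \alpha_{i-1} < \epsilon .$$
Finally I would write
$$v - \sum_{i=1}^n \alpha_i p_i = \sum_{i=1}^n \big( C_{p_i}(v) - \alpha_i p_i \big),$$
where the $i$-th summand is supported in the band $V_{p_i}$ and the bands are mutually orthogonal. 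Invoking the $\infty$-orthogonal additivity of the order unit norm recorded in the definition of an absolute order unit space (the $\perp_\infty$ behaviour governing $(O.\perp_\infty.1)$), the norm of an orthogonal sum equals the maximum of the norms of its terms, so
$$\Big\Vert v - \sum_{i=1}^n \alpha_i p_i \Big\Vert = \max_{1 \le i \le n} \Vert C_{p_i}(v) - \alpha_i p_i \Vert < \epsilon ,$$
which is the assertion. This is the order-unit analogue of the classical Riemann--Stieltjes approximation of a self-adjoint element by its spectral family; the only genuinely nonroutine point is the compression-composition identity underlying the band estimate.
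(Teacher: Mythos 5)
This theorem is stated in the paper without proof: it is quoted verbatim from \cite[Theorem 7.12]{K18}, so there is no in-paper argument to compare your attempt against. Your outline is the standard band-decomposition proof of such Riemann--Stieltjes spectral approximations and is sound as a sketch: the telescoping orthogonal resolution $\{p_i\}$ of $e$, the two-sided estimate $\alpha_{i-1}\,p_i \le C_{p_i}(v) \le \alpha_i\,p_i$, and the elementary order-unit bound $\pm\sum_i u_i \le \left(\max_i \Vert u_i \Vert\right) e$ for $u_i \in V_{p_i}$ (which already gives the needed inequality $\le \max_i \Vert u_i \Vert$ without invoking $\perp_\infty$-additivity, and is safer than your claimed equality, which is not needed) together yield the assertion. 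The steps you defer --- that differences of comparable order projections are order projections, that $v = \sum_i C_{p_i}(v)$, and that compressing $C_{e_{\alpha_i}}(v)$ further to $p_i \le e_{\alpha_i}$ returns $C_{p_i}(v)$ --- are precisely the compression calculus developed in \cite{K18}, so your proposal is correct modulo citing those facts rather than reproving them, which is consistent with how the present paper itself treats the result.
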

Let $V$ be a strictly convex Banach space and consider the corresponding absolute order unit space $(V^{(\infty)}, e)$. In this section, we describe the spectral decomposition of an element in $V^{(\infty)}$. Following which we introduce a (non-necessary bilinear) binary operation in it. We use it to characterize spin factors and also to discuss some local Jordan algebra structures in $V^{(\infty)}$. 

First, we show that $OP\left(V^{(\infty)}\right)$ has the covering property. 
\begin{proposition}\label{s1}
	$OP\left(V^{(\infty)}\right)$ covers $V^{(\infty)}$ absolutely. In fact, for each $(v, \alpha) \in V^{(\infty)}$, $(v, \alpha) \ne 0$, we have $\left( \frac{v}{2 \Vert v \Vert}, \frac 12 \right)$ covers $(v, \alpha)$ when $\Vert v \Vert = \alpha$; $\left( \frac{- v}{2 \Vert v \Vert}, \frac 12 \right)$ covers $(v, \alpha)$ when $\Vert v \Vert = - \alpha$ and in other cases, the cover of $(v, \alpha)$ is $e$.
\end{proposition}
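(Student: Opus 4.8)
The plan is to verify the covering property directly, reading off everything from the two facts already established: the order-unit norm on $V^{(\infty)}$ is $\Vert (v,\alpha)\Vert = \Vert v\Vert + \vert\alpha\vert$ (this is exactly the order-unit norm computed from $-te \le (v,\alpha) \le te$), and $OP(V^{(\infty)}) = \{(u,\tfrac12):\Vert u\Vert = \tfrac12\}\cup\{0,e\}$. I would first observe that uniqueness in the definition of absolute cover is automatic: if two order projections are each minimal among those $q$ with $\vert(v,\alpha)\vert \le \Vert(v,\alpha)\Vert q$, they dominate one another and hence coincide. So for each of the three cases in the definition of $\vert\cdot\vert$ it suffices to exhibit the candidate $p$, check $\vert(v,\alpha)\vert \le \Vert(v,\alpha)\Vert\,p$, and check that $p\le q$ for every order projection $q$ satisfying the same domination.

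For the case $\Vert v\Vert = \alpha$ (so $v\ne 0$, $\alpha>0$, and $(v,\alpha)\in V^{(\cdot)+}$), I take $p=\left(\tfrac{v}{2\Vert v\Vert},\tfrac12\right)$ and note $\Vert(v,\alpha)\Vert = 2\alpha$, so that $\Vert(v,\alpha)\Vert\,p = (v,\alpha) = \vert(v,\alpha)\vert$; the domination holds with equality. For minimality I take any $q\in OP$ with $(v,\alpha)\le 2\alpha q$: the value $q=0$ is impossible since $(v,\alpha)\not\le 0$, the value $q=e$ gives $p\le q$ trivially, and for $q=(u,\tfrac12)$ the inequality $(v,\alpha)\le(2\alpha u,\alpha)$ has zero second coordinate in the difference, forcing $2\alpha u = v$, i.e. $q=p$. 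The case $\Vert v\Vert = -\alpha$ is symmetric: here $\vert(v,\alpha)\vert = (-v,-\alpha)$, $\Vert(v,\alpha)\Vert = -2\alpha$, and the candidate is $p=\left(\tfrac{-v}{2\Vert v\Vert},\tfrac12\right)$, with the same two verifications.

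For the remaining case $\vert\alpha\vert<\Vert v\Vert$ the claim is that the cover is $e$. Since every order projection satisfies $q\le e$, minimality of $e$ amounts to showing that no proper order projection dominates $\vert(v,\alpha)\vert$ after scaling. Using $\vert(v,\alpha)\vert = \left(\tfrac{\alpha}{\Vert v\Vert}v,\Vert v\Vert\right)$ and $\Vert(v,\alpha)\Vert\,e = (0,\Vert v\Vert+\vert\alpha\vert)$, the domination by $e$ holds (the first-coordinate norm $\vert\alpha\vert$ of the difference equals its second coordinate). The value $q=0$ fails because $v\ne 0$ makes $\vert(v,\alpha)\vert\ne 0$; and for $q=(u,\tfrac12)$ the second coordinate of $\Vert(v,\alpha)\Vert\,q - \vert(v,\alpha)\vert$ equals $\tfrac{\vert\alpha\vert-\Vert v\Vert}{2}<0$, so, a positive element of $V^{(\cdot)}$ having nonnegative second coordinate, no such $q$ can dominate. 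Hence $e$ is the only dominating order projection and so is the cover.

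The one genuinely delicate point, and the place I expect to spend the most care, is the minimality argument in this third case, where all the equatorial projections $(u,\tfrac12)$ must be excluded at once; the clean mechanism is the strict negativity of the second coordinate of the relevant difference, which holds \emph{precisely} because $\vert\alpha\vert<\Vert v\Vert$. Everything else is arithmetic of the order-unit norm together with the explicit form of $\vert\cdot\vert$ from Remark \ref{05}, with strict convexity of $V$ entering only indirectly, through the structure theorems already proved (the description of $OP(V^{(\infty)})$ and of the absolute value), rather than through any fresh estimate.
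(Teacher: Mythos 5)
Your computations are all correct and your method is the same as the paper's: verify the covering property directly from the explicit description of $OP(V^{(\infty)})$ and the order-unit norm $\Vert (v,\alpha)\Vert = \Vert v\Vert + \vert\alpha\vert$, using the second coordinate of the difference to rule out the equatorial projections $(u,\tfrac12)$. However, your case analysis is not exhaustive, so as written the proof does not establish the full statement. You treat $\Vert v\Vert = \alpha$, $\Vert v\Vert = -\alpha$, and $\vert\alpha\vert < \Vert v\Vert$, and call the last of these ``the remaining case''; but the elements with $\Vert v\Vert < \vert\alpha\vert$ --- the interiors of $V^{(\infty)+}$ and $-V^{(\infty)+}$, e.g.\ $(0,1)$ or $(v, 2\Vert v\Vert)$ --- belong to none of your three cases, and the proposition's ``other cases'' clause covers precisely these together with $\vert\alpha\vert<\Vert v\Vert$. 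Your first case only works on the boundary of the cone: the identity $\Vert(v,\alpha)\Vert\,p = (v,\alpha)$ for $p = \bigl(\tfrac{v}{2\Vert v\Vert},\tfrac12\bigr)$ uses $\Vert v\Vert = \alpha$ essentially.

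The gap is easily filled by the mechanism you already isolated: for $\Vert v\Vert < \vert\alpha\vert$ one has $\vert(v,\alpha)\vert = \pm(v,\alpha)$ and, for any $q = (u,\tfrac12)$, the second coordinate of $\Vert(v,\alpha)\Vert\,q - \vert(v,\alpha)\vert$ equals $\tfrac{\Vert v\Vert - \vert\alpha\vert}{2} < 0$ --- the mirror image of your computation in the exterior case --- so no proper projection dominates and the cover is $e$. For comparison, the paper avoids the split between boundary and interior of the cone by normalizing $\Vert v\Vert + \alpha = 1$ and observing that $(v,\alpha)\le(u,\tfrac12)$ forces $\alpha\le\tfrac12$, hence $\alpha = \tfrac12 = \Vert v\Vert$ and then $v = u$; this single inequality simultaneously identifies the cover on the boundary and excludes all equatorial projections in the interior. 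Please add the missing case (or absorb it into your first two cases as the paper does); everything else stands.
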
 
\begin{proof}
	Let $(v, \alpha) \in V^{(\infty)}$. As order projections have the order unit property, without any loss of generality, we may assume that $\Vert v \Vert + \vert \alpha \vert = \Vert (v, \alpha) \Vert_{\infty} = 1$. Let $(v, \alpha) \in V^{(\infty)+}$. Then $\Vert v \Vert \le \alpha$ so that $\alpha \ge \frac 12$. Clearly, $(v, \alpha) \le e$. Let $(v, \alpha) \le (u, \frac 12)$ for some $u \in V$ with $\Vert u \Vert = \frac 12$. Then $\alpha \le \frac 12$ so that $\alpha = \frac 12 = \Vert v \Vert$. Also then $\Vert v - u \Vert \le \frac 12 - \alpha = 0$ so that $v = u$. In other words, when $\Vert v \Vert = \alpha$, then $(u, \frac 12)$ covers $(v, \alpha)$ where $u = \frac{v}{2 \Vert v \Vert}$ and when $\Vert v \Vert < \alpha$, then $e$ covers $(v, \alpha)$. For $(v, \alpha) \in - V^{(\infty)+}$, we consider $- (v, \alpha) \in V^{(\infty)+}$ as $(v, \alpha)$ and $(v, \alpha)$ have the same covers. Finally, let $(v, \alpha) \notin V^{(\infty)+} \cup - V^{(\infty)+}$. Then $\Vert v \Vert > \vert \alpha \vert$ and $\vert (v, \alpha) \vert = \left( \frac{\alpha v}{\Vert v \Vert}, \Vert v \Vert \right)$. Thus as above, $e$ covers $(v, \alpha)$ absolutely.	
\end{proof} 
It is a routine matter to verify that $V^{(\infty)}$ is monotone complete. Below we describe the sptral family of a given element $(v, \alpha)$ of $V^{(\infty)}$.
\begin{proposition}\label{s2}
	The spectral family of $(v, \alpha) \in V^{(\infty)}$ is $\lbrace e_{\lambda}(v, \alpha): \lambda \in \R \rbrace$ where 
	$$e_{\lambda}(v, \alpha) = \begin{cases} 0 & , \mbox{if}~ \lambda < \alpha - \Vert v \Vert \\ 
	\left( \frac{- v}{2 \Vert v \Vert}, \frac 12 \right) & , \mbox{if}~ \alpha - \Vert v \vert \le \lambda < \alpha + \Vert v \Vert \\
	e & , \mbox{if}~ \alpha + \Vert v \Vert \le \lambda. \end{cases}$$
\end{proposition}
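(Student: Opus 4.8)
The plan is to invoke the uniqueness half of Theorem \ref{53}: since $V^{(\infty)}$ is monotone complete and $OP(V^{(\infty)})$ covers it absolutely by Proposition \ref{s1}, each element has a \emph{unique} spectral family, so it suffices to verify that the candidate $\{e_\lambda(v,\alpha)\}$ satisfies the six defining conditions of Theorem \ref{53}. Throughout I would fix $(v,\alpha)$ with $v \ne 0$ (the case $v = 0$ is degenerate: the middle interval $[\alpha - \Vert v \Vert, \alpha + \Vert v \Vert)$ is empty, the undefined term $\frac{-v}{2\Vert v \Vert}$ never occurs, and the family collapses to that of $\alpha e$), and abbreviate $p = \left(\frac{v}{2\Vert v \Vert}, \frac12\right)$, $p' = e - p = \left(\frac{-v}{2\Vert v \Vert}, \frac12\right)$, $a = \alpha - \Vert v \Vert$ and $b = \alpha + \Vert v \Vert$, so that $a \le b$ and, by Remark \ref{05}, $u := (v,\alpha) = b\,p + a\,p'$.

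Conditions (1), (2) and (6) are then immediate from the step-function form together with $0 \le p' \le e$: the family takes the increasing values $0 \le p' \le e$; it is $0$ below $a \ge -(|\alpha| + \Vert v \Vert) = -\Vert u \Vert_\infty$ and equals $e$ from $b \le |\alpha| + \Vert v \Vert = \Vert u \Vert_\infty$ onward; and the half-open intervals make it right-continuous, giving $\wedge_{\lambda > \lambda_0} e_\lambda = e_{\lambda_0}$. Condition (3) holds since $u = bp + ap' \in U_p + U_{p'} = AC(p')$, and $p'$ is the only nontrivial projection in the family. For condition (4) I would compute the compressions regime by regime from $C_{p'}(u) = ap'$, $C_p(u) = bp$, $C_0(u) = 0$, $C_e(u) = u$; the required inequalities reduce to the elementary order facts $u \ge \lambda e \iff \lambda \le a$ and $u \le \lambda e \iff \lambda \ge b$ (restatements of $(v, \alpha - \lambda) \in V^{(\cdot)+}$, respectively $(-v, \lambda - \alpha) \in V^{(\cdot)+}$), which are routine.

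The real content is condition (5), the minimality of $e_\lambda$, and the key lemma I would isolate is that for $v \ne 0$ the only order projections $q$ with $u \in AC(q)$ are $0, e, p, p'$. To prove it I would first show $U_q = \R q$ for a nontrivial $q = \left(w, \frac12\right)$: if $|x| \le \mu q$ with $|x| = (a_0, b_0) \in V^{(\cdot)+}$, then $\Vert \mu w - a_0 \Vert \le \tfrac{\mu}{2} - b_0$, and since $b_0 \ge \Vert a_0 \Vert$ and $\Vert \mu w \Vert = \tfrac{\mu}{2}$, the reverse triangle inequality forces the chain $\tfrac{\mu}{2} - b_0 \ge \Vert \mu w - a_0 \Vert \ge \tfrac{\mu}{2} - \Vert a_0 \Vert \ge \tfrac{\mu}{2} - b_0$ to be equalities; hence $b_0 = \Vert a_0 \Vert$ and $\Vert \mu w - a_0 \Vert = \Vert \mu w \Vert - \Vert a_0 \Vert$, so strict convexity of $V$ gives $a_0 \in \R w$, and tracing back through the three cases of the absolute value yields $x \in \R q$. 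Consequently $AC(q) = U_q + U_{q'} = (\R w) \times \R$, so $u \in AC(q)$ forces $v \in \R w$, i.e. $w = \pm \frac{v}{2\Vert v \Vert}$ and $q \in \{p, p'\}$.

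With this lemma, condition (5) becomes a finite check over $q \in \{0, e, p, p'\}$. Using the compressions above, the hypotheses $C_q(u) \le \lambda q$ and $C_q'(u) \ge \lambda q'$ translate as follows: $q = e$ requires $\lambda \ge b$, and then $e_\lambda = e \ge q$; $q = p$ requires $\lambda \ge b$ and $\lambda \le a$ simultaneously, impossible since $a < b$ for $v \ne 0$; $q = p'$ requires $a \le \lambda \le b$, in which range $e_\lambda \in \{p', e\} \ge p' = q$; and $q = 0 \le e_\lambda$ always. Thus $q \le e_\lambda$ whenever the hypotheses hold, establishing (5) and completing the verification. The main obstacle is precisely the key lemma identifying $AC(q)$: it is the only place where strict convexity of $V$ is genuinely used, and it is what collapses the a priori infinite supply of order projections to the four relevant ones.
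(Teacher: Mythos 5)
Your proposal is correct, but it takes a genuinely different route from the paper. The paper's proof is a direct computation: it sets $v_{\lambda} = (v,\alpha) - \lambda e$, reads off the absolute covers $C_{\lambda}^{\pm}(v,\alpha)$ of $v_{\lambda}^{\pm}$ in each of the five regimes of $\lambda$ from Proposition \ref{s1}, and then simply quotes the formula $e_{\lambda}(v,\alpha) = e - C_{\lambda}^{+}(v,\alpha)$ from \cite[Theorem 7.10]{K18}; strict convexity enters only through Proposition \ref{s1}. You instead treat Theorem \ref{53} axiomatically, verifying conditions (1)--(6) for the candidate family and appealing to uniqueness. The price of your route is condition (5), which forces you to classify all order projections $q$ with $(v,\alpha) \in AC(q)$; your key lemma that $U_q = \mathbb{R}q$ for nontrivial $q$ (so that $AC(q) = \mathbb{R}w \times \mathbb{R}$ and only $0, e, p, p'$ survive) is correct, and is essentially Lemma \ref{06l} in disguise --- indeed $0 \le x \le \mu q$ with $q = (w,\tfrac12)$ is exactly the hypothesis of that lemma after rescaling, so you could shorten your argument by citing it rather than re-running the equality case of the triangle inequality. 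What your approach buys is independence from the explicit cover formula of \cite[Theorem 7.10]{K18} and an explicit confirmation of the minimality clause; what the paper's approach buys is brevity, since Proposition \ref{s1} has already done all the work. Both proofs silently assume $v \ne 0$ in the main computation, and your explicit remark that the $v=0$ case degenerates to the family of $\alpha e$ is a small but genuine improvement in care over the paper's version.
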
 
\begin{proof}
	Put $(v, \alpha) - \lambda e = (v, \alpha - \lambda) := v_{\lambda}$. Let us write $C_{\lambda}^+(v, \alpha)$ for the absolute cover of $v_{\lambda}^+$ and $C_{\lambda}^-(v, \alpha)$ for the absolute cover of $v_{\lambda}^-$.
	
	When $\lambda < \alpha - \Vert v \Vert$, $v_{\lambda} \in V^{(\infty)+}$ and by Proposition \ref{s1}, $C_{\lambda}^+(v, \alpha) = e$ and $C_{\lambda}^-(v, \alpha) = 0$. 
	
	When $\lambda = \alpha - \Vert v \Vert$, $v_{\lambda} \in V^{(\infty)+}$ and by Proposition \ref{s1}, $C_{\lambda}^+(v, \alpha) = \left( \frac{v}{2 \Vert v \Vert}, \frac 12 \right)$ and $C_{\lambda}^-(v, \alpha) = 0$. 
	
	When $\lambda = \alpha + \Vert v \Vert$, $v_{\lambda} \in - V^{(\infty)+}$ and by Proposition \ref{s1}, $C_{\lambda}^-(v, \alpha) = \left( \frac{- v}{2 \Vert v \Vert}, \frac 12 \right)$ and $C_{\lambda}^+(v, \alpha) = 0$. 
	
	When $\lambda > \alpha - \Vert v \Vert$, $v_{\lambda} \in - V^{(\infty)+}$ and by Proposition \ref{s1}, $C_{\lambda}^+(v, \alpha) = 0$ and $C_{\lambda}^-(v, \alpha) = e$. 
	
	Now, let $\alpha - \Vert v \Vert < \lambda < \alpha + \Vert v \Vert$. Then $\vert \alpha - \lambda \vert < \Vert v \Vert$ so that $\vert v_{\lambda} \vert = \left( \frac{(\alpha - \lambda) v}{2 \Vert v \Vert}, \Vert v \Vert \right)$. Thus $v_{\lambda}^+ = (\alpha - \lambda + \Vert v \Vert) \left( \frac{v}{2 \Vert v \Vert}, \frac 12 \right)$ and $v_{\lambda}^- = (\alpha - \lambda + \Vert v \Vert) \left( \frac{- v}{2 \Vert v \Vert}, \frac 12 \right)$. Therefore, $C_{\lambda}^+(v, \alpha) = \left( \frac{v}{2 \Vert v \Vert}, \frac 12 \right)$ and $C_{\lambda}^-(v, \alpha) = \left( \frac{- v}{2 \Vert v \Vert}, \frac 12 \right)$. Summarizing, we get 
	$$C_{\lambda}^+(v, \alpha) = \left\{ \begin{array}{ll} e & , \mbox{if}~ \lambda < \alpha - \Vert v \Vert \\ 
	\left( \frac{v}{2 \Vert v \Vert}, \frac 12 \right) & , \mbox{if}~ \alpha - \Vert v \vert \le \lambda < \alpha + \Vert v \Vert \\
	0 & , \mbox{if}~ \alpha + \Vert v \Vert \le \lambda \end{array} \right.$$
	and 
	$$C_{\lambda}^-(v, \alpha) = \left\{ \begin{array}{ll} 0 & , \mbox{if}~ \lambda \le \alpha - \Vert v \Vert \\ 
	\left( \frac{- v}{2 \Vert v \Vert}, \frac 12 \right) & , \mbox{if}~ \alpha - \Vert v \vert < \lambda \le \alpha + \Vert v \Vert \\
	e & , \mbox{if}~ \alpha + \Vert v \Vert < \lambda. \end{array} \right.$$ 
	Since $e_{\lambda}(v, \alpha) = e - C_{\lambda}^+(v, \alpha)$ \cite[Theorem 7.10]{K18}, the result follows.
\end{proof}
\begin{remark}\label{s3} 
	\begin{enumerate}
		\item Let $(v, \alpha) \in V^{(\infty)}$. Then the spectral decomposition of $(v, \alpha)$ is given by 
		$$(v, \alpha) = (\alpha - \Vert v \Vert) \left( \frac{- v}{2 \Vert v \Vert}, \frac 12 \right) + (\alpha + \Vert v \Vert) \left( \frac{v}{2 \Vert v \Vert}, \frac 12 \right).$$ 
		Thus for each $n \in \N$, we can formally define 
		\begin{eqnarray*}
			(v, \alpha)^n &:=& (\alpha - \Vert v \Vert)^n \left( \frac{- v}{2 \Vert v \Vert}, \frac 12 \right) + (\alpha + \Vert v \Vert)^n \left( \frac{v}{2 \Vert v \Vert}, \frac 12 \right) \\ 
			&=& \left( \left( \binom{n}{1} \alpha^{n-1} + \binom{n}{3} \alpha^{n-3} \Vert v \Vert^2 + \cdots \right) v, \left( \binom{n}{0} \alpha^n + \binom{n}{2} \alpha^{n-2} \Vert v \Vert^2 + \cdots \right) \right).
		\end{eqnarray*} 
		In particular, we have $(v, \alpha)^2 = (2 \alpha v, \alpha^2 + \Vert v \Vert^2)$ for any $(v, \alpha) \in V^{(\infty)}$. 
		\item We have $V^{(\infty)+} = \left\lbrace (v, \alpha)^2: (v, \alpha) \in V^{(\infty)} \right\rbrace.$ In fact for any $(v, \alpha) \in V^{(\infty)}$, we have $(v, \alpha)^2 = (2 \alpha v, \alpha^2 + \Vert v \Vert^2) \in V^{(\infty)}$ for $\Vert 2 \alpha v \Vert = 2 \vert \alpha \vert \Vert v \Vert \le \alpha^2 + \Vert v \Vert^2$. Conversely, let 
		$(v, \alpha) \in V^{(\infty)+}$ so that $\Vert v \Vert \le \alpha$. If $v = 0$, then $(0, \sqrt{\alpha})$ the unique positive element in $V^{(\infty)+}$ such that $(0, \sqrt{\alpha})^2 = (v, \alpha)$. So we assume that $v \ne 0$. Put 
		\[ \lambda = \dfrac{\sqrt{\alpha - \sqrt{\alpha^2 - \Vert v \Vert^2}}}{\Vert v \Vert \sqrt{2}} \quad \textrm{and} \quad \mu = \dfrac{\Vert v \Vert}{\sqrt{2 (\alpha - \sqrt{\alpha^2 - \Vert v \Vert^2})}}. \] 
		Then $0 \le \lambda \Vert v \Vert \le \mu$ and $(\lambda v, \mu)$ is the unique positive element in $V^{(\infty)+}$ such that $(\lambda v, \mu)^2 = (v, \alpha)$. In particular, every positive element in $V^{(\infty)}$ has a unique positive square root in $V^{(\infty)}$.
		\item Consider $(u, \alpha), (v, \beta) \in V^{(\infty)}$. Then for $\lambda, \mu \in \R$, we have 
		\[ \left( \lambda (u, \alpha) + \mu (v, \beta) \right)^2 = \left(2 (\lambda \alpha + \mu \beta) (\lambda u + \mu v), (\lambda \alpha + \mu \beta)^2 + \Vert \lambda u + \mu v \Vert^2 \right). \] 
		In this expression, we can segregate terms with $\lambda^2$, $\lambda \mu$ and $\mu^2$ explicitly, provided that $\Vert \lambda u + \mu v \Vert_0^2$ can be expanded in the said form. This is possible in few cases: 
		\begin{enumerate}
			\item \emph{$V$ is a Hilbert space}: In this case, $\Vert \lambda u + \mu v \Vert^2 = \lambda \Vert u \Vert^2 + 2 \lambda \mu \langle u, v \rangle + \mu^2 \Vert v \Vert^2$. Thus 
			\begin{eqnarray*}
				&\left( \lambda (u, \alpha) + \mu (v, \beta) \right)^2 = \lambda^2 (u, \alpha)^2 + 2 \lambda \mu (\beta u + \alpha v, \alpha \beta + \langle u, v \rangle ) + \mu^2 (v, \beta)^2 \\
				&= \lambda^2 (u, \alpha)^2 + 2 \lambda \mu (\beta u + \alpha v, \alpha \beta + \frac 14 \lbrace \Vert u + v \Vert^2 - \Vert u - v \Vert^2 \rbrace ) + \mu^2 (v, \beta)^2.
			\end{eqnarray*}
			%\[ \left( \lambda (u, \alpha) + \mu (v, \beta) \right)^2 = \lambda^2 (u, \alpha)^2 + 2 \lambda \mu (\beta u + \alpha v, \alpha \beta + \langle u, v \rangle ) + \mu^2 (v, \beta)^2. \] 
			\item \emph{The set $\lbrace u, v \rbrace$ is linearly dependent}: In this case, without any loss of generality, we may assume that $u = k u_0$ and $v = l u_0$ for some $u_0 \in V_0$ with $\Vert u_0 \Vert_0 = 1$ and $k, l \in \R$. Then 
			\begin{eqnarray*}
				&\left( \lambda (k u_0, \alpha) + \mu (l u_0, \beta) \right)^2 = \left(2 (\lambda \alpha + \mu \beta) (\lambda k + \mu l) u_0, (\lambda \alpha + \mu \beta)^2 +  (\lambda k + \mu l)^2 \right) \\
				&= \lambda^2 (2 k \alpha u_0, \alpha^2 + k^2) + 2 \lambda \mu ((k \beta + l \alpha) u_0, \alpha \beta + k l) + \mu^2 (2 l \beta u_0, \beta^2 + l^2) \\
				&= \lambda^2 (k u_0, \alpha)^2 + 2 \lambda \mu ((k \beta + l \alpha) u_0, \alpha \beta + k l) + \mu^2 (l u_0, \beta)^2.
			\end{eqnarray*}
		\end{enumerate} 
	\end{enumerate} 
\end{remark} 

\subsection{The binary operation} 
The two cases described in Remark \ref{s3} are not the exhaustive. Nevertheless, these observations encourage us to introduce a binary operation $\circ$ in $V^{(\infty)}$ in the following sense. For $(u, \alpha), (v, \beta) \in V^{(\infty)}$, we define 
\begin{eqnarray*}
	(u, \alpha) \circ (v, \beta) &:=& \frac 14 \left\lbrace (u + v, \alpha + \beta)^2 - (u - v, \alpha - \beta)^2 \right\rbrace \\ 
	&=& \left(\alpha v + \beta u, \alpha \beta + \frac 14 (\Vert u + v \Vert^2 - \Vert u - v \Vert^2) \right).
\end{eqnarray*} 
However, in general, $\circ$ may not be bilinear. 
\begin{theorem}\label{s4}
	The binary operation $\circ$ is bilinear in $V^{(\infty)}$ if and only if $V$ is a Hilbert space.
\end{theorem}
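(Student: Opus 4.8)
The plan is to reduce the bilinearity of $\circ$ to the bilinearity of a single real-valued form and then recognize that form as an inner product. Writing out the definition, the first coordinate $\alpha v + \beta u$ of $(u, \alpha) \circ (v, \beta)$ is manifestly bilinear in the pair $\left( (u, \alpha), (v, \beta) \right)$, and in the second coordinate the summand $\alpha \beta$ is bilinear as well. Consequently $\circ$ is bilinear if and only if the symmetric form
$$B(u, v) := \tfrac14 \left( \Vert u + v \Vert^2 - \Vert u - v \Vert^2 \right)$$
is linear in each of its arguments. I would make this equivalence precise by writing down the linearity identity $\left( \lambda (u_1, \alpha_1) + (u_2, \alpha_2) \right) \circ (v, \beta) = \lambda\, (u_1, \alpha_1) \circ (v, \beta) + (u_2, \alpha_2) \circ (v, \beta)$, comparing second coordinates, and cancelling the $\alpha \beta$-type terms, which leaves exactly the additivity and homogeneity of $B$ in its first variable; symmetry of $B$ then transfers this to the second variable.

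For the forward implication, if $V$ is a Hilbert space then the polarization identity gives $B(u, v) = \langle u, v \rangle$, which is bilinear; this is precisely the computation already recorded in Remark \ref{s3}(3)(a), and it immediately yields that $\circ$ is bilinear.

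For the converse, assume $\circ$ is bilinear, so that $B$ is bilinear by the reduction above. I would then observe three properties of $B$: it is symmetric, since $\Vert u + v \Vert = \Vert v + u \Vert$ and $\Vert u - v \Vert = \Vert v - u \Vert$; it satisfies $B(u, u) = \tfrac14 \Vert 2u \Vert^2 = \Vert u \Vert^2$; and therefore $B(u, u) \ge 0$ with equality if and only if $u = 0$, since $\Vert \cdot \Vert$ is a norm. Thus $B$ is a positive-definite symmetric bilinear form, that is, an inner product on $V$, whose induced norm is $\sqrt{B(u, u)} = \Vert u \Vert$. Since $V$ is complete, being a Banach space, $(V, B)$ is a Hilbert space whose norm is the given one, which completes the proof. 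Equivalently, one may note that bilinearity of $B$ forces the parallelogram law $\Vert u + v \Vert^2 + \Vert u - v \Vert^2 = 2 \Vert u \Vert^2 + 2 \Vert v \Vert^2$ and invoke the Jordan--von Neumann theorem.

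The only genuinely non-formal point, and hence the main thing to get right, is the reduction in the first paragraph: that bilinearity of the $V^{(\infty)}$-valued map $\circ$ is exactly equivalent to bilinearity of the scalar form $B$, together with the identity $B(u, u) = \Vert u \Vert^2$. Once this is in hand the conclusion is immediate, since a symmetric bilinear $B$ with $B(u,u) = \Vert u \Vert^2$ is automatically an inner product inducing the original norm.
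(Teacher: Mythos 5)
Your proof is correct and takes essentially the same route as the paper: both isolate the scalar form $B(u,v)=\tfrac14\bigl(\Vert u+v\Vert^2-\Vert u-v\Vert^2\bigr)$ sitting in the second coordinate of $\circ$ and show that bilinearity of $\circ$ forces $B$ to be an inner product inducing the given norm (equivalently, forces the parallelogram law, after which one invokes Jordan--von Neumann). The only cosmetic difference is that the paper extracts the parallelogram law from the single instance $(2u+v,0)\circ(v,0)=2(u,0)\circ(v,0)+(v,0)\circ(v,0)$ of distributivity, whereas you first establish the clean equivalence between bilinearity of $\circ$ and bilinearity of $B$; both arguments are complete.
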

\begin{proof}
	When $V$ is a Hilbert space, $V^{(\infty)}$ is a $JB$-algebra with $\circ$ as the corresponding Jordan product \cite[Lemma 6.1.3]{H-OS84}. Conversely, we assume that $\circ$ is bilinear. Then for $u, v \in V$ we have 
	\[ (2 u + v, 0) \circ (v, 0) = 2 (u, 0) \circ (v, 0) + (v, 0) \circ (v, 0) \] 
	so that 
	\[ \Vert 2 u + 2 v \Vert^2 - \Vert 2 u \Vert^2 = 2 \Vert u + v \Vert^2 - 2 \Vert u - v \Vert^2 + 4 \Vert v \Vert^2. \] 
	Thus 
	\[ \Vert u + v \Vert^2 + \Vert u - v \Vert^2 = 2 \Vert u \Vert^2 + 2 \Vert v \Vert^2 \] 
	for all $u, v \in V$. Hence $V$ is a Hilbert space.
\end{proof}
Next, we discuss some general properties of the binary operation $\circ$. 
We begin with a relation between the absolute orthogonality among a pair of elements in $V^{(\infty)}$ and their corresponding $\circ$-product. We use the following result.
\begin{lemma}\label{s7}
	Let $V$ be a strictly convex Banach space and assume that $u, v \in V$ and $\alpha, \beta \in \R$. Then $\vert (u, \alpha) \vert \perp \vert (v, \beta) \vert$ if and only if $\beta u + \alpha v = 0$, $\Vert u \Vert = \vert \alpha \vert$ and $\Vert v \Vert = \vert \beta \vert$.
\end{lemma}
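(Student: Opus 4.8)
The plan is to reduce everything to Proposition \ref{02}, since $\vert (u, \alpha) \vert$ and $\vert (v, \beta) \vert$ are by construction elements of $V^{(\cdot)+}$, and orthogonality of a pair of \emph{nonzero} positive elements is described there completely. First I would record the explicit shape of the absolute value: writing $\vert (u, \alpha) \vert = (u', \alpha')$, a check against the three branches of the definition shows that in every case $\alpha' = \max(\Vert u \Vert, \vert \alpha \vert)$ and $\Vert u' \Vert = \min(\Vert u \Vert, \vert \alpha \vert)$, and similarly for $(v, \beta)$. Consequently $\vert (u, \alpha) \vert$ lies on the boundary of the cone, i.e. $\alpha' = \Vert u' \Vert$, exactly when $\Vert u \Vert = \vert \alpha \vert$; and when $\alpha \ne 0$ this boundary element is $(\operatorname{sgn}(\alpha) u, \vert \alpha \vert)$, whose normalized $V$-component simplifies uniformly (regardless of the sign of $\alpha$) to $u'/\Vert u' \Vert = u/\alpha$.

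For the forward direction I would assume $\vert (u, \alpha) \vert \perp \vert (v, \beta) \vert$ with both factors nonzero. Proposition \ref{02} then forces $\alpha' = \Vert u' \Vert > 0$ and $\beta' = \Vert v' \Vert > 0$, which by the first paragraph is precisely $\Vert u \Vert = \vert \alpha \vert > 0$ and $\Vert v \Vert = \vert \beta \vert > 0$. The remaining conclusion of Proposition \ref{02}, namely $u'/\Vert u' \Vert = - v'/\Vert v' \Vert$, reads $u/\alpha = - v/\beta$ after substituting the normalized components above, i.e. $\beta u + \alpha v = 0$. Conversely, given the three right-hand relations, the equalities $\Vert u \Vert = \vert \alpha \vert$ and $\Vert v \Vert = \vert \beta \vert$ place both absolute values on the boundary as $(\operatorname{sgn}(\alpha) u, \vert \alpha \vert)$ and $(\operatorname{sgn}(\beta) v, \vert \beta \vert)$, and $\beta u + \alpha v = 0$ rearranges to $u/\alpha = - v/\beta$, which is exactly the hypothesis of the converse half of Proposition \ref{02}; hence orthogonality follows. (Corollary \ref{04} could be invoked instead to phrase this through a common order projection, but Proposition \ref{02} is the most direct route.)

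I expect the main thing one must get right to be the bookkeeping of the degenerate cases, since the substantive geometric input -- that orthogonal positive elements must both be boundary points pointing in exactly opposite directions -- is supplied verbatim by Proposition \ref{02}. The delicate point is that if one absolute value vanishes, say $(u, \alpha) = 0$, then $\vert (u, \alpha) \vert = 0$ is orthogonal to \emph{every} positive element, so the stated equivalence is really the equivalence for nonzero $(u, \alpha)$ and $(v, \beta)$; under the vanishing of a factor the right-hand relations collapse to the trivial identities $\Vert u \Vert = \vert \alpha \vert = 0$ and $\beta u + \alpha v = 0$, and these cases should be dispatched by hand at the outset so that the argument above runs entirely within the nonzero regime where Proposition \ref{02} applies.
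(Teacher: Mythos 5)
Your proposal is correct and follows essentially the same route as the paper: both directions are reduced to the characterization of orthogonal nonzero positive pairs in Proposition \ref{02}, with the degenerate (zero) cases dispatched separately. The only difference is organizational --- your observation that $\vert (u,\alpha)\vert = (u',\alpha')$ always satisfies $\alpha' = \max(\Vert u \Vert, \vert \alpha \vert)$ and $\Vert u' \Vert = \min(\Vert u \Vert, \vert \alpha \vert)$ replaces the paper's three-way case analysis on the position of $(u,\alpha)$ and $(v,\beta)$ relative to $\pm V^{(\infty)+}$, and in fact covers the mixed case (one element inside a cone, the other outside both) more uniformly than the paper's explicit case list.
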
 
\begin{proof}
	First, we assume that $\vert (u, \alpha) \vert \perp \vert (v, \beta) \vert$.
	
	Case 1. $(u, \alpha), (v, \beta) \in V^{(\infty)+}$. In this case, $(u, \alpha) \perp (v, \beta)$. Thus there exists $u_0 \in V$ with $\Vert u_0 \Vert = 1$ such that $(u, \alpha) = \alpha (u_0, 1)$ and $(v, \beta) = \beta (- u_0, 1)$. Therefore, $\beta u + \alpha v = 0$, $\Vert u \Vert = \vert \alpha \vert$ and $\Vert v \Vert = \vert \beta \vert$. The case $- (u, \alpha), - (v, \beta) \in V^{(\infty)+}$ is similar.
	
	Case 2. $(u, \alpha), - (v, \beta) \in V^{(\infty)+}$. In this case, $(u, \alpha) \perp (- v, - \beta)$. Thus there exists $u_0 \in V$ with $\Vert u_0 \Vert = 1$ such that $(u, \alpha) = \alpha (u_0, 1)$ and $(v, \beta) = \beta (- u_0, 1)$. Therefore, once more, we have $\beta u + \alpha v = 0$, $\Vert u \Vert = \vert \alpha \vert$ and $\Vert v \Vert = \vert \beta \vert$. The case $- (u, \alpha), (v, \beta) \in V^{(\infty)+}$ is similar. 
	
	Case 3. $(u, \alpha), (v, \beta) \notin V^{(\infty)+} \cup V^{(\infty)+}$. In this case, we have $\vert \alpha \vert < \Vert u \Vert$, $\vert \beta \vert < \Vert v \Vert$, $\vert (u, \alpha) \vert = \left( \frac{\alpha u}{\Vert u \Vert}, \Vert u \Vert \right)$ and $\vert (v, \beta) \vert = \left( \frac{\beta v}{\Vert v \Vert}, \Vert v \Vert \right)$. Now, by Proposition \ref{02}, $\vert (u, \alpha) \vert \not\perp \vert (v, \beta) \vert$. Thus Case 3 does not arise.
	
	Conversely, we assume that $\beta u + \alpha v = 0$, $\Vert u \Vert_0 = \vert \alpha \vert$ and $\Vert v \Vert_0 = \vert \beta \vert$. If $\alpha = 0$, the $u = 0$ so that $\vert (u, \alpha) \vert (= (0, 0)) \perp \vert (v, \beta) \vert$. Similarly, $\beta = 0$ implies $\vert (u, \alpha) \vert \perp \vert (v, \beta) \vert$. So we may assume that $\alpha \beta \ne 0$. Then $u \ne 0$ and $v \ne 0$. Also, then $u_0 := \alpha^{-1} u = - \beta^{-1} v$ has norm one and we have $(u, \alpha) = \alpha (u_0, 1)$ and $(v, \beta) = \beta (- u_0, 1)$. Now, by Proposition \ref{02}, $\vert (u, \alpha) \vert \perp \vert (v, \beta) \vert$. 
\end{proof}
\begin{theorem}\label{s8}
	Let $V$ be a strictly convex Banach space and assume that $u, v \in V$ and $\alpha, \beta \in \R$. Then 
	\begin{enumerate}[$(i)$]
		\item $(u, \alpha) \circ (v, \beta) = (0, 0)$ and $\lbrace u, v \rbrace$ is linearly independent if and only if $u \ne 0$, $v \ne 0$, $\alpha = 0 = \beta$ and $\Vert u + v \Vert_0 = \Vert u - v \Vert_0$; 
		\item $(u, \alpha) \circ (v, \beta) = (0, 0)$ and $\lbrace u, v \rbrace$ is linearly dependent if and only if $\vert (u, \alpha) \vert \perp \vert (v, \beta) \vert$.
	\end{enumerate}
\end{theorem}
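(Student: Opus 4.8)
The plan is to expand the hypothesis $(u,\alpha)\circ(v,\beta) = (0,0)$ into the two components dictated by the definition of $\circ$. Equating each coordinate of
$$(u,\alpha)\circ(v,\beta) = \left(\alpha v + \beta u,\ \alpha\beta + \tfrac14\left(\Vert u+v\Vert^2 - \Vert u-v\Vert^2\right)\right)$$
to zero, the condition is equivalent to the pair of equations $\alpha v + \beta u = 0$ (call it (E1)) and $\alpha\beta + \tfrac14\left(\Vert u+v\Vert^2 - \Vert u-v\Vert^2\right) = 0$ (call it (E2)). Both parts of the theorem then amount to reading off what (E1) and (E2) say under the two complementary hypotheses on $\{u,v\}$.

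For part $(i)$, I would argue purely formally. If $\{u,v\}$ is linearly independent then $u\neq 0$ and $v\neq 0$, and (E1) is a nontrivial linear relation, forcing $\alpha = \beta = 0$; substituting into (E2) leaves $\Vert u+v\Vert = \Vert u-v\Vert$. Conversely, if $u,v\neq 0$, $\alpha=\beta=0$ and $\Vert u+v\Vert = \Vert u-v\Vert$, then (E1) and (E2) hold trivially, so the product is $(0,0)$, and independence follows from homogeneity of the norm: a relation $v = cu$ with $c\neq 0$ (forced since $v\neq 0$) would give $\vert 1+c\vert = \vert 1-c\vert$, hence $c=0$, a contradiction. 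Note that part $(i)$ uses neither strict convexity nor Lemma \ref{s7}.

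For part $(ii)$, the key tool is Lemma \ref{s7}, which characterizes $\vert(u,\alpha)\vert\perp\vert(v,\beta)\vert$ by $\beta u + \alpha v = 0$, $\Vert u\Vert = \vert\alpha\vert$ and $\Vert v\Vert = \vert\beta\vert$. For the forward implication, in the generic case $u\neq 0$, linear dependence lets me write $v = cu$; then (E1) gives $\beta = -\alpha c$, and after computing $\Vert u\pm v\Vert = \vert 1\pm c\vert\,\Vert u\Vert$ equation (E2) collapses to $c\left(\Vert u\Vert^2 - \alpha^2\right) = 0$, whence $\Vert u\Vert = \vert\alpha\vert$ and then $\Vert v\Vert = \vert c\vert\,\Vert u\Vert = \vert\beta\vert$, which are exactly the hypotheses of Lemma \ref{s7}. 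For the converse, Lemma \ref{s7} supplies $\alpha v + \beta u = 0$ directly (this is (E1), and it also yields the linear dependence); substituting $v = -(\beta/\alpha)u$ together with $\Vert u\Vert = \vert\alpha\vert$ into $\tfrac14\left(\Vert u+v\Vert^2 - \Vert u-v\Vert^2\right) = c\Vert u\Vert^2$ produces $-\alpha\beta$, so (E2) holds.

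The step I expect to require the most care is the bookkeeping of the degenerate cases $u=0$ or $v=0$, where one of $(u,\alpha),(v,\beta)$ collapses and Lemma \ref{s7} is awkward to apply at face value. There the conclusions are immediate and should be handled separately: if $u=0$ then (E1) forces $\alpha=0$, so $(u,\alpha)=(0,0)$ and $\vert(0,0)\vert$ is orthogonal to every positive element, while (E2) holds automatically because $\Vert 0+v\Vert = \Vert 0-v\Vert$; the case $v=0$ is symmetric. Once these boundary situations are disposed of, invoking Lemma \ref{s7} in the generic case $u,v\neq 0$ completes both implications.
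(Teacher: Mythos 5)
Your proposal is correct and follows essentially the same route as the paper: expand $\circ$ into the two coordinate equations, read off part $(i)$ directly from linear independence and homogeneity of the norm, and reduce part $(ii)$ to Lemma \ref{s7} (whose content is in turn Proposition \ref{02}), exactly as the paper does. The only blemish is in the degenerate bookkeeping for $(ii)$: when $u=0$, equation (E1) gives $\alpha v=0$ rather than $\alpha=0$, so you must also treat the subcase $v=0$, $\alpha\neq 0$, where (E2) forces $\beta=0$ and $\vert (v,\beta)\vert=(0,0)$ is orthogonal to everything --- the same patch the paper makes.
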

\begin{proof}
	We note that $(u, \alpha) \circ (v, \beta) = (0, 0)$ if and only if $\beta u + \alpha v = 0$ and $\Vert u - v \Vert^2 - \Vert u + v \Vert^2 = 4 \alpha \beta$. 
	
	(i). If $(u, \alpha) \circ (v, \beta) = (0, 0)$ and if $\lbrace u, v \rbrace$ is linearly independent, then $u \ne 0$, $v \ne 0$, $\alpha = 0 = \beta$ and $\Vert u + v \Vert = \Vert u - v \Vert$. Conversely, assume that $u \ne 0$, $v \ne 0$, $\alpha = 0 = \beta$ and $\Vert u + v \Vert = \Vert u - v \Vert$. Then 
	$$(u, 0) \circ (v, 0) = (0, \Vert u + v \Vert^2 - \Vert u - v \Vert^2) = (0, 0).$$
	We show that $\lbrace u, v \rbrace$ is linearly independent. Let $\lambda u + \mu v = 0$. If $\lambda \ne 0$, then $u = - \lambda^{-1} \mu v$. Thus $\Vert u + v \Vert = \Vert u - v \Vert$ implies that $\vert 1 - \lambda^{-1} \mu \vert \Vert v \Vert = \vert 1 + \lambda^{-1} \mu \vert \Vert v \Vert$. As $v \ne 0$, we conclude that $\mu = 0$ so that $u = 0$. But by assumption, $u \ne 0$ so $\lambda$ must be zero. Similarly, we can show that $\mu = 0$. Therefore, $\lbrace u, v \rbrace$ is linearly independent. 
	
	(ii). Next, we assume that $(u, \alpha) \circ (v, \beta) = (0, 0)$ and that $\lbrace u, v \rbrace$ is linearly dependent. First let $u = 0$. In this case, $\alpha v = 0$. If $\alpha = 0$, then clearly, $\vert (u, \alpha) \vert (= (0, 0)) \perp \vert (v, \beta) \vert$. So let $\alpha \ne 0$. Then $v = 0$. Now, $\Vert u - v \Vert^2 - \Vert u + v \Vert^2 = 4 \alpha \beta$ yields that $\beta = 0$. Therefore, $\vert (v, \beta) \vert \perp (= (0, 0)) \perp \vert (u, \alpha) \vert$. The case $v = 0$ is similar. In the same way, we can show that $\vert (u, \alpha) \vert \perp \vert (v, \beta) \vert$ whenever $\alpha = 0$ or $\beta = 0$. So we may assume that $u \ne 0$, $v \ne 0$ and $\alpha \beta \ne 0$. Put $u_0 := \alpha^{-1} u = - \beta^{-1} v$. Then 
	\[ 4 \alpha \beta = \Vert (\alpha + \beta) u_0 \Vert^2 - \Vert (\alpha - \beta) u_0 \Vert^2 = 4 \alpha \beta \Vert u_0 \Vert^2 \] 
	so that $\Vert u_0 \Vert = 1$ as $\alpha \beta \ne 0$. Hence by Proposition \ref{02}, we have $\vert (u, \alpha) \vert \perp \vert (v, \beta) \vert$. 
	
	Finally, we assume that $\vert (u, \alpha) \vert \perp \vert (v, \beta) \vert$. Then by Lemma \ref{s7}, we have $\beta u + \alpha v = 0$, $\Vert u \Vert = \vert \alpha \vert$ and $\Vert v \Vert = \vert \beta \vert$. If $\alpha = 0$, then $u = 0$ so that $u$ is linearly dependent on $v$ and we have $(u, \alpha) \circ (v, \beta) = (0, 0)$. So we assume that $\alpha \ne 0$. Then $\lbrace u, v \rbrace$ is linearly dependent with $v = - \alpha^{-1} \beta u$. Thus 
	\[ \Vert u - v \Vert^2 - \Vert u + v \Vert^2 = (1 + \alpha^{-1} \beta)^2 \Vert u \Vert^2 - (1 - \alpha^{-1} \beta)^2 \Vert u \Vert^2 = 4 \alpha \beta \] 
	as $\Vert u \Vert = \vert \alpha \vert$. Thus $(u, \alpha) \circ (v, \beta) = (0, 0)$. This completes the proof.
\end{proof}
\begin{remark}
	We produce an example to show that in Theorem \ref{s8}(i), $(u, \alpha) \circ (v, \beta) = (0, 0)$ may not imply $(k (u, \alpha)) \circ (l (v, \beta)) = (0, 0)$ for all $k, l \in \R$. Consider $V_0 = \ell_4^2$ and let $u = \left(1, \frac{\sqrt{3} + \sqrt{5}}{2} \right)$ and $v = \left(1, \frac{\sqrt{3} - \sqrt{5}}{2} \right)$. Then $\Vert u + k v \Vert_4 = \Vert u -  k v \Vert_4$ with $k > 0$ if and only if $k = 1$. Thus $(k (u, 0)) \circ (l (v, 0)) = (0, 0)$ with $k, l \in \R$ if and only if $\vert k l \vert = 1$.
	
	However, the case (ii) of Theorem \ref{s8} is more liberal. If $\lbrace u, v \rbrace$ is linearly dependent with $(u, \alpha) \circ (v, \beta) = (0, 0)$, then $(k (u, \alpha)) \circ (l (v, \beta)) = (0, 0)$ for any $k, l \in \R$. This follows from the fact that $\vert (u, \alpha) \vert \perp \vert (v, \beta) \vert$ implies $\vert k (u, \alpha) \vert \perp \vert l (v, \beta) \vert$ for all $k,l \in \R$. In fact, we can say more.
\end{remark}
\begin{proposition}\label{s5}
	Let $V$ be a strictly convex Banach space. For $u \in V$ with $\Vert u \Vert = 1$, consider 
	\[ V(u) := \lbrace (\alpha u, \beta): \alpha, \beta \in \R \rbrace. \] 
	Then $V(u)$ with $\circ$ (restricted to $V(u)$) is an associative commutative algebra over $\R$. Further, when the norm $\Vert \cdot \Vert_{\infty}$ of $V^{(\infty)}$ is restricted to $V(u)$, then $V(u)$ becomes a unital associative $JB$-algebra with the order unit $e = (0, 1)$ as the unity.
\end{proposition}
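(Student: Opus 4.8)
The plan is to reduce everything to an explicit, manifestly bilinear formula for $\circ$ on $V(u)$ and then to recognize the resulting object as a familiar two-dimensional algebra. First I would take two generic elements $(\alpha u, \beta), (\gamma u, \delta) \in V(u)$ and substitute them into the defining formula for $\circ$. The only non-elementary terms are $\Vert \alpha u + \gamma u \Vert$ and $\Vert \alpha u - \gamma u \Vert$; since $u$ is fixed with $\Vert u \Vert = 1$, these collapse to $\vert \alpha + \gamma \vert$ and $\vert \alpha - \gamma \vert$, so that
$$\frac 14 \left( \Vert \alpha u + \gamma u \Vert^2 - \Vert \alpha u - \gamma u \Vert^2 \right) = \frac 14 \left( (\alpha + \gamma)^2 - (\alpha - \gamma)^2 \right) = \alpha \gamma.$$
This yields the clean product
$$(\alpha u, \beta) \circ (\gamma u, \delta) = \big( (\alpha \delta + \beta \gamma) u, \ \alpha \gamma + \beta \delta \big),$$
which at once shows that $V(u)$ is closed under $\circ$, that $\circ$ restricted to $V(u)$ is bilinear and commutative, and that $e = (0,1)$ acts as a two-sided unit.

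For associativity and the $JB$-structure, rather than grinding out the Jordan identity I would exhibit an explicit isomorphism onto a model algebra. Define $\Phi : V(u) \to \mathbb{R}^2$ by $\Phi(\alpha u, \beta) = (\beta + \alpha, \beta - \alpha)$, where $\mathbb{R}^2$ carries coordinatewise multiplication. A direct check of the two coordinates gives $\Phi\big((\alpha u, \beta) \circ (\gamma u, \delta)\big) = \Phi(\alpha u, \beta)\,\Phi(\gamma u, \delta)$, and $\Phi$ is visibly a linear bijection; hence $V(u)$ is an associative, commutative $\mathbb{R}$-algebra isomorphic to $\mathbb{R} \oplus \mathbb{R}$, with $\Phi(e) = (1,1)$ the unit. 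This settles the first assertion of the proposition.

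It then remains to match the norms. Here I would use that the order unit norm on $V^{(\infty)}$ satisfies $\Vert (v, \gamma) \Vert_{\infty} = \Vert v \Vert + \vert \gamma \vert$, so that $\Vert (\alpha u, \beta) \Vert_{\infty} = \vert \alpha \vert + \vert \beta \vert$. Since $\max(\vert \beta + \alpha \vert, \vert \beta - \alpha \vert) = \vert \alpha \vert + \vert \beta \vert$, the map $\Phi$ is in fact isometric onto $(\mathbb{R}^2, \Vert \cdot \Vert_{\infty})$. Now $(\mathbb{R}^2,\ \text{coordinatewise product},\ \max\text{-norm})$ is the commutative real C$^*$-algebra of functions on a two-point set, hence a unital associative $JB$-algebra; transporting this structure back through the isometric algebra isomorphism $\Phi$ gives the same for $(V(u), \circ, \Vert \cdot \Vert_{\infty})$, with unity $e = (0,1)$.

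The computation is short, and I expect the only real subtlety to be conceptual rather than technical. On all of $V^{(\infty)}$ the product $\circ$ fails to be bilinear unless $V$ is a Hilbert space (Theorem \ref{s4}), yet on $V(u)$ the offending term $\frac14(\Vert \cdot \Vert^2 - \Vert \cdot \Vert^2)$ becomes a genuine bilinear form precisely because every first coordinate lies on the single line $\mathbb{R}u$ determined by the unit vector $u$. This is exactly the linearly dependent situation already isolated in Remark \ref{s3}, where $\circ$ was seen to expand bilinearly; recognizing that this forces bilinearity, and therefore associativity, is the crux of the argument, while everything else is routine verification.
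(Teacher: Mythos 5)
Your proof is correct, and its first half coincides with the paper's: both reduce the product on $V(u)$ to the closed bilinear formula $(\alpha u, \beta) \circ (\gamma u, \delta) = ((\alpha \delta + \beta \gamma) u, \, \alpha \gamma + \beta \delta)$, which works precisely because both first coordinates lie on the single line $\R u$. Where you genuinely diverge is in certifying the associative $JB$-structure. The paper stays inside the order-unit framework: it proves $V(u) \cap V^{(\infty)+} = \lbrace (\alpha u, \beta)^2 : \alpha, \beta \in \R \rbrace$ by writing down explicit positive square roots, verifies $\Vert (\alpha u, \beta)^2 \Vert_{\infty} = (\vert \alpha \vert + \vert \beta \vert)^2 = \Vert (\alpha u, \beta) \Vert_{\infty}^2$, and then invokes the order-theoretic characterization of unital $JB$-algebras in \cite[Proposition 3.1.6]{H-OS84} via the equivalence of $- e \le x \le e$ with $0 \le x^2 \le e$ (associativity itself is left as a routine check). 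You instead exhibit the map $\Phi(\alpha u, \beta) = (\beta + \alpha, \beta - \alpha)$, verify it is an algebra isomorphism onto $\R^2$ with coordinatewise multiplication, and use the identity $\max(\vert \beta + \alpha \vert, \vert \beta - \alpha \vert) = \vert \alpha \vert + \vert \beta \vert$ to see it is isometric for the max-norm, so that the whole structure is transported from the two-point function algebra. Your route is more concrete and disposes of associativity and the $JB$-axioms in one stroke, with no appeal to the Hanche-Olsen--St{\o}rmer criterion; the paper's route is less explicit but records the square-root formulas and the description of the positive cone of $V(u)$, which mesh with the spectral machinery developed elsewhere in Section 3. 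Both arguments are complete.
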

\begin{proof}
	For $\alpha, \beta, \gamma, \delta \in \mathbb{R}$ we have 
	$$(\alpha u, \beta) \circ (\gamma u, \delta) = ((\alpha \delta + \beta \gamma) u, \alpha \gamma + \beta \delta).$$ 
	Now, it is straight forward to show that $\circ$ is bilinear, commutative and associative in $V(u)$ so that $V(u)$ is a commutative, associative algebra over real field. Further, we have $e = (0, 1)$ is the multiplicative identity. Next, we show that 
	\[ V(u)^+ := V(u) \cap V^{(\infty)+} = \lbrace (\alpha u, \beta)^2: \alpha, \beta \in \R \rbrace. \] 
	Note that 
	$$(\alpha u, \beta)^2 = (2 \alpha \beta u, \Vert \alpha u \Vert^2 + \beta^2) = (2 \alpha \beta u, \alpha^2 + \beta^2)$$ 
	for all $\alpha, \beta \in \R$. Since $\Vert 2 \alpha \beta u \Vert = 2 \vert \alpha \beta \vert \le \alpha^2 + \beta^2$, we get that $(\alpha u, \beta)^2 \in V^{(\infty)+}$ for all $\alpha, \beta \in \R$. Conversely, let $(\alpha u, \beta) \in V^{(\infty)+}$. Then $\vert \alpha \vert = \Vert \alpha u \Vert \le \beta$. Put 
	\[ \lambda = sign(\alpha) \sqrt{\frac{\beta - \sqrt{\beta^2 - \alpha^2}}{2}} \quad \textrm{and} \quad \mu = \sqrt{\frac{\beta + \sqrt{\beta^2 - \alpha^2}}{2}}. \] 
	Then $(\lambda u, \mu)$ is positive and $(\lambda u, \mu)^2 = (\alpha u, \beta)$. 
	
	Let $\alpha, \beta \in \R$. Then 
	\begin{eqnarray*}
		\Vert (\alpha u, \beta)^2 \Vert_{\infty} &=& \Vert (2 \alpha \beta u, \alpha^2 + \beta^2) \Vert_{\infty} \\ 
		&=& \Vert 2 \alpha \beta u \Vert + \alpha^2 + \beta^2 \\ 
		&=& (\vert \alpha \vert + \vert \beta \vert)^2 \\ 
		&=& \Vert (\alpha u, \beta) \Vert_{\infty}^2.
	\end{eqnarray*}
	In particular, we have $\Vert (\alpha u, \beta) \Vert_{\infty} \le 1$ if and only if $0 \le (\alpha u, \beta)^2 \le e$. Now by \cite[Proposition 3.1.6]{H-OS84}, $V(u)$ is a unital $JB$-algebra with the restricted norm $\Vert \cdot \Vert_{\infty}$.
\end{proof} 
We can generalize Proposition \ref{s5}. Let $x, y$ be any two elements in a normed linear space $X$. We say that $x \perp_2 y$, if $\Vert x + k y \Vert^2 = \Vert x \Vert^2 + \Vert k y \Vert^2$ for all $k \in \R$ \cite[Section 2]{K14}. %Let $V$ be a strictly convex Banach space and assume that $u, v \in V$. Then $u \perp_2 v$ if and only if $(k u, 0) \circ (l v, 0) = (0, 0)$ for all $k, l \in \R$.
\begin{theorem}\label{s6}
	Let $V$ be a strictly convex Banach space and assume that $u, v \in V$ with $\Vert u \Vert = 1 = \Vert v \Vert$ such that $u \perp_2 v$. Consider 
	$$V(u, v) := \left\lbrace \left( \alpha_1 u + \alpha_2 v, \beta \right): \alpha_1, \alpha_2, \beta \in \R \right\rbrace.$$
	Then $V(u, v)$ together with $\circ$ (restricted to $V(u, v)$) is a (non-associative) Jordan algebra. Further, when the norm $\Vert \cdot \Vert_{\infty}$ of $V^{(\infty)}$ is restricted to $V(u, v)$, then $V(u, v)$ becomes a unital $JB$-algebra with the order unit $e = (0, 1)$ as the unity.
\end{theorem}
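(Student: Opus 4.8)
The plan is to reduce the statement to the already understood Hilbert-space case, exactly as Proposition \ref{s5} reduced the one-dimensional situation to an associative subalgebra. The decisive observation is that the hypotheses $u \perp_2 v$ and $\Vert u \Vert = 1 = \Vert v \Vert$ force the norm on $\mathrm{span}\lbrace u, v \rbrace$ to be Euclidean. Indeed, for $a, b \in \R$ with $a \ne 0$ I would factor out $a$ and apply $u \perp_2 v$ with $k = b/a$ to obtain $\Vert a u + b v \Vert^2 = a^2 \Vert u + (b/a) v \Vert^2 = a^2(1 + (b/a)^2) = a^2 + b^2$, the case $a = 0$ being immediate. Hence $\Vert \alpha_1 u + \alpha_2 v \Vert^2 = \alpha_1^2 + \alpha_2^2$ for all $\alpha_1, \alpha_2 \in \R$. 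In particular the identity shows $\lbrace u, v \rbrace$ is linearly independent, so $H := \mathrm{span}\lbrace u, v \rbrace$ is two-dimensional; declaring $u, v$ orthonormal defines an inner product whose induced norm is precisely the restriction of $\Vert \cdot \Vert$ to $H$, making $H$ a two-dimensional real Hilbert space isometrically embedded in $V$.

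Next I would identify $V(u, v)$ with $H^{(\infty)} = H \times \R$. Writing $\xi = \alpha_1 u + \alpha_2 v$ and $\eta = \gamma_1 u + \gamma_2 v$, the Euclidean identity from the first step turns the scalar term $\frac 14(\Vert \xi + \eta \Vert^2 - \Vert \xi - \eta \Vert^2)$ in the definition of $\circ$ into $\alpha_1 \gamma_1 + \alpha_2 \gamma_2 = \langle \xi, \eta \rangle$, so that
$$(\xi, \beta) \circ (\eta, \delta) = \left( \beta \eta + \delta \xi,\ \beta \delta + \langle \xi, \eta \rangle \right).$$
This is exactly the spin factor product recalled in the introduction. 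Consequently the right-hand side lies again in $V(u, v)$, so $V(u, v)$ is closed under $\circ$; the restricted operation is bilinear and commutative (note that bilinearity holds \emph{locally} here even though $V$ need not be Hilbert, precisely because the norm-square is quadratic on $H$), and $e = (0, 1)$ is the unit. Moreover the restriction of the order unit norm satisfies $\Vert (\alpha_1 u + \alpha_2 v, \beta) \Vert_{\infty} = \sqrt{\alpha_1^2 + \alpha_2^2} + \vert \beta \vert$, which is the order unit norm of $H^{(\infty)}$, and the positive cone matches as well.

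Therefore $V(u, v)$, equipped with $\circ$ and $\Vert \cdot \Vert_{\infty}$, is literally the absolute order unit space $H^{(\infty)}$ attached to the Hilbert space $H$. Since $H$ is Hilbert, Theorem \ref{s4} (equivalently \cite[Lemma 6.1.3]{H-OS84}) gives that $\circ$ is bilinear on $H^{(\infty)}$ and that $H^{(\infty)}$ is a unital $JB$-algebra, i.e.\ a spin factor; transporting this back yields the conclusion, with the Jordan identity and the $JB$-norm axioms inherited rather than checked by hand. Alternatively, mimicking Proposition \ref{s5}, I would verify $\Vert x \circ x \Vert_{\infty} = \Vert x \Vert_{\infty}^2$ and $\Vert x \circ x \Vert_{\infty} \le \Vert x \circ x + y \circ y \Vert_{\infty}$ directly and invoke \cite[Proposition 3.1.6]{H-OS84}. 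The only genuinely substantive step is the first one: extracting the full Euclidean identity from the one-sided relation $u \perp_2 v$. Once that quadratic identity is secured everything else collapses to the known spin-factor case, so I expect no serious obstacle beyond it.
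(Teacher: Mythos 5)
Your argument is correct, and it packages the proof differently from the paper. Both proofs hinge on the same computation --- that $u \perp_2 v$ together with $\Vert u \Vert = \Vert v \Vert = 1$ forces $\Vert \alpha_1 u + \alpha_2 v \Vert^2 = \alpha_1^2 + \alpha_2^2$, whence the scalar part of the product becomes $\alpha_1\gamma_1 + \alpha_2\gamma_2 + \beta\delta$ --- but after that point you diverge. The paper stays inside $V(u,v)$ and verifies everything by hand: it writes out the product formula, asserts the Jordan identity by a ``tedious but straightforward calculation,'' exhibits non-associativity on the pair $(u,0), (v,0)$, and then checks $\Vert x^2 \Vert_\infty = \Vert x \Vert_\infty^2$ together with positivity of squares so as to invoke \cite[Proposition 3.1.6]{H-OS84}. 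You instead observe that the Euclidean identity makes $H = \mathrm{span}\lbrace u, v\rbrace$ an isometric copy of $\ell_2^2$ inside $V$, that the cone, the order unit norm and the operation $\circ$ on $V(u,v)$ depend only on the restriction of $\Vert\cdot\Vert$ to $H$, and hence that $V(u,v)$ \emph{is} the spin factor $H^{(\infty)}$; the Jordan identity, bilinearity and the $JB$-norm axioms are then inherited from \cite[Lemma 6.1.3]{H-OS84} (equivalently the Hilbert-space direction of Theorem \ref{s4}) rather than recomputed. This is a legitimate and arguably cleaner reduction; what it buys is the elimination of the unverified ``tedious calculation'' step, at the cost of having to note that all the relevant structure (cone, norm, absolute value, product) genuinely restricts, which you do. The one item you leave implicit is the parenthetical ``non-associative'' in the statement: it is part of the claim, and while it follows at once from your product formula (e.g.\ $(u,0)\circ\bigl((v,0)\circ(v,0)\bigr) = (u,0)$ whereas $\bigl((u,0)\circ(v,0)\bigr)\circ(v,0) = (0,0)$, the paper's own example), you should record such a computation explicitly rather than appeal to general facts about spin factors.
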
 
\begin{proof}
	Let $\alpha_1, \alpha_2, \beta, \gamma_1, \gamma_2, \delta \in \R$. Then 
	\begin{eqnarray*}
		\Vert \alpha_1 u + \alpha_2 v + \gamma_1 u + \gamma_2 v \Vert^2 &=& \Vert (\alpha_1 + \gamma_1) u \Vert^2 + \Vert (\alpha_2 + \gamma_2) v \Vert^2 \\ 
		&=& (\alpha_1 + \gamma_1)^2 + (\alpha_2 + \gamma_2)^2 
	\end{eqnarray*} 
	and 
	\begin{eqnarray*}
		\Vert \alpha_1 u + \alpha_2 v - \gamma_1 u - \gamma_2 v \Vert^2 &=& \Vert (\alpha_1 - \gamma_1) u \Vert^2 + \Vert (\alpha_2 - \gamma_2) v \Vert^2 \\ 
		&=& (\alpha_1 - \gamma_1)^2 + (\alpha_2 - \gamma_2)^2. 
	\end{eqnarray*}
	Thus 
	\begin{eqnarray*}
		(\alpha_1 u + \alpha_2 v, \beta) &\circ& (\gamma_1 u + \gamma_2 v, \delta) \\ 
		&=& (\delta (\alpha_1 u + \alpha_2 v) + \beta (\gamma_1 u + \gamma_2 v), \\ 
		& &\beta \delta + \frac 14 (\Vert \alpha_1 u + \alpha_2 v + \gamma_1 u + \gamma_2 v \Vert^2 - \Vert \alpha_1 u + \alpha_2 v - \gamma_1 u - \gamma_2 v \Vert^2)) \\
		&=& ((\delta \alpha_1 + \beta \gamma_1) u + (\delta \alpha_2 + \beta \gamma_2) v, \alpha_1 \gamma_1 + \alpha_2 \gamma_2 + \beta \delta ).
	\end{eqnarray*}
	It is a tedious but straight forward calculation to show that $\circ$ is a bilinear and commutative product in $V(u, v)$ satisfying Jordan identity: 
	\[ x \circ (y \circ x^2) = (x \circ y) \circ x^2 \] 
	for all $x, y \in V(u, v)$. However, as 
	$$(v, 0) \circ (v, 0) = (0, 1)$$ 
	and 
	$$(u, 0) \circ (v, 0) = (0, 0),$$ 
	we have  
	$$(u, 0) \circ ((v, 0) \circ (v, 0)) \ne ((u, 0) \circ (v, 0)) \circ (v, 0) .$$ 
	Thus $\circ$ is not associative in $V(u, v)$. 
	
	Next, we note that 
	\[ \Vert (\alpha_1 u + \alpha_2 v, \beta) \Vert_{\infty} = \Vert \alpha_1 u + \alpha_2 v \Vert + \vert \beta \vert = (\alpha_1^2 + \alpha_2^2)^{\frac 12} + \vert \beta \vert \] 
	for all $\alpha_1, \alpha_2, \beta \in \R$. Also 
	$$(\alpha_1 u + \alpha_2 v, \beta)^2 = (2 \alpha_1 \beta u + 2 \alpha_2 \beta v, \alpha_1^2 + \alpha_2^2 + \beta^2)$$ 
	so that 
	\[ \Vert (\alpha_1 u + \alpha_2 v, \beta)^2 \Vert_{\infty} = 2 \vert \beta \vert (\alpha_1^2 + \alpha_2^2)^{\frac 12} + \alpha_1^2 + \alpha_2^2 + \beta^2 = \Vert (\alpha_1 u + \alpha_2 v, \beta) \Vert_{\infty}^2. \] 
	Since $\Vert \cdot \Vert_{\infty}$ is an order unit norm on $V(u, v)$ with the order unit $e = (0, 1)$ as distinguished multiplicative identity and since $x^2 \in V(u, v)^+$ for all $x \in V(u, v)$, we conclude that 
	\[ - e \le (\alpha_1 u + \alpha_2 v, \beta) \le e \quad \textrm{if and only if} \quad 0 \le (\alpha_1 u + \alpha_2 v, \beta)^2 \le e. \] 
	Hence by \cite[Proposition 3.1.6]{H-OS84}, $V(u, v)$ is a (non-associative) $JB$-algebra.
\end{proof} 
\begin{example}
	Consider the $3$-dimensional real space $\ell_4^3$. Let $u = 2^{- \frac 14} (1, 1, 0)$ and $v = 18^{- \frac 14} (1, - 1, 2)$. Then $\Vert u \Vert_4 = 1 = \Vert v \Vert_4$ with $u \perp_2 v$. Let $H_1$ be the linear span of $u$ and $v$. Then  
	\[ H_1 = \lbrace (\alpha, \beta, \gamma): \gamma = \alpha + \beta \rbrace. \] 
	We note that $H_1$ is a Hilbert space with the inner product 
	\[ \langle (\alpha, \beta, \alpha + \beta), (\gamma, \delta, \gamma + \delta) \rangle = \frac{1}{\sqrt{2}} \left\lbrace 2 (\alpha \gamma + \beta \delta) + (\alpha \delta + \beta \gamma) \right\rbrace \] 
	in such a way that the corresponding norm coincides with $\Vert \cdot \Vert_4$ restricted to $H_1$. We note that $H_1$ is linearly isomorphic to the plane $z = x + y$.
\end{example} 
In general, there is no guaranty that a strictly convex Banach space may have pair of $2$-orthogonal pair of non-zero vectors. 
\begin{theorem}
	Let $1 < p < \infty$, $p \ne 2$ and consider $u = (\alpha_1, \alpha_2), v = (\beta_1, \beta_2) \in \ell_p^2$. Then $u \perp_2 v$ if and only if either $u = 0$ or $v = 0$.
\end{theorem}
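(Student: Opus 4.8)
The plan is to prove the contrapositive of the nontrivial (``only if'') direction: assuming $u \neq 0$ \emph{and} $v \neq 0$, I would derive a contradiction from $u \perp_2 v$, so that two nonzero vectors can never be $2$-orthogonal when $p \neq 2$. (The ``if'' direction is immediate, as $\Vert 0 + kv\Vert^2 = \Vert kv\Vert^2 = \Vert 0\Vert^2 + \Vert kv\Vert^2$.) First I would dispose of the linearly dependent case: if $v = cu$ with $c \neq 0$, then $u \perp_2 v$ reads $(1+kc)^2\Vert u\Vert^2 = (1 + k^2c^2)\Vert u\Vert^2$ for all $k$, forcing $2kc = 0$ and hence $c = 0$, a contradiction. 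So from now on $\lbrace u, v\rbrace$ is linearly independent, i.e.\ $D := \alpha_1\beta_2 - \alpha_2\beta_1 \neq 0$.

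Next I would recast the defining identity. Writing $a = \Vert u\Vert^2 > 0$ and $b = \Vert v\Vert^2 > 0$, the relation $\Vert u + kv\Vert^2 = \Vert u\Vert^2 + k^2\Vert v\Vert^2$ becomes, after raising both sides to the power $p/2$, the single functional identity
\[ |\alpha_1 + k\beta_1|^p + |\alpha_2 + k\beta_2|^p = (a + bk^2)^{p/2}, \qquad k \in \R. \]
The right-hand side is real-analytic on all of $\R$, because $a + bk^2 > 0$ for every $k$. The argument then splits according to whether $p$ is an even integer.

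If $p \notin 2\N$, I would use a kink/analyticity argument. Since $v \neq 0$, some $\beta_i \neq 0$, and the coordinate $\alpha_i + k\beta_i$ vanishes at $k_0 = -\alpha_i/\beta_i$; at this point the other coordinate equals $\pm D/\beta_i \neq 0$ by linear independence, hence is nonzero in a neighbourhood of $k_0$. So near $k_0$ the left-hand side equals $|\beta_i|^p\,|k - k_0|^p$ plus a function real-analytic at $k_0$. Since $p \notin 2\N$, the term $|k - k_0|^p$ is not real-analytic at $k_0$, whereas the right-hand side is; subtracting the analytic pieces would force $|\beta_i|^p |k-k_0|^p$ to be real-analytic, a contradiction.

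If $p$ is an even integer (necessarily $p \geq 4$, as $p \neq 2$), both sides are genuine polynomials in $k$ of degree $p$, with common leading coefficient $|\beta_1|^p + |\beta_2|^p > 0$, so they must share the same complex roots with multiplicity. The right-hand side factors as $b^{p/2}(k - is)^{p/2}(k + is)^{p/2}$ with $s = \sqrt{a/b} > 0$, so it has exactly the two distinct roots $\pm is$. The roots of the left-hand side, on the other hand, are exactly the $k$ solving $(\alpha_1 + k\beta_1)/(\alpha_2 + k\beta_2) = \omega$ as $\omega$ runs over the $p$ distinct $p$-th roots of $-1$; the map $k \mapsto (\alpha_1 + k\beta_1)/(\alpha_2 + k\beta_2)$ is an injective M\"obius transformation (its determinant is $-D \neq 0$), and since $-1$ has no real $p$-th root for even $p$ while the image of $k = \infty$ is real (or $\infty$), every such $\omega$ has a \emph{finite} preimage. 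Thus the left-hand side has $p \geq 4$ distinct roots, contradicting the two distinct roots of the right-hand side. This even-integer case is the genuine obstacle, since there $|x|^p$ is a polynomial with no kinks and the analyticity argument collapses entirely; the root-counting via injectivity of the associated M\"obius map is what rescues it. Combining the two cases completes the proof.
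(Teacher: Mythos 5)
Your proof is correct, and it takes a genuinely different route from the paper's. The paper works directly with the homogenized two-parameter identity $(\vert k\alpha_1+l\beta_1\vert^p+\vert k\alpha_2+l\beta_2\vert^p)^{2/p}=k^2+l^2$ (after normalizing $\Vert u\Vert_p=\Vert v\Vert_p=1$), substitutes the special values $(k,l)=(\alpha_1,-\beta_1)$ and $(\alpha_2,-\beta_2)$, and uses the strict monotonicity of $x\mapsto x^2-(1-x^p)^{2/p}$ on $[0,1]$ to force $\vert\alpha_1\vert=\vert\beta_2\vert$ and $\vert\alpha_2\vert=\vert\beta_1\vert$; a sign case analysis then pins $u$ and $v$ to the coordinate axes, where the identity visibly fails for $p\ne 2$. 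That argument is entirely elementary but computational, and it leans on the normalization $\vert\alpha_1\vert^p+\vert\alpha_2\vert^p=1$ throughout. You instead dispose of the linearly dependent case by a one-line expansion, recast the condition as the single functional identity $\vert\alpha_1+k\beta_1\vert^p+\vert\alpha_2+k\beta_2\vert^p=(a+bk^2)^{p/2}$, and split on whether $p$ is an even integer: for $p\notin 2\mathbb{N}$ the left side has a genuine non-analytic kink at $k_0=-\alpha_i/\beta_i$ (the other coordinate being nonzero there precisely by linear independence) while the right side is real-analytic since $a>0$; for even $p\ge 4$ you count distinct complex roots, using the injectivity of the M\"obius map $k\mapsto(\alpha_1+k\beta_1)/(\alpha_2+k\beta_2)$ and the fact that the $p$-th roots of $-1$ are all non-real to produce $p$ distinct roots against the two roots $\pm i\sqrt{a/b}$ of the right side. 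Your version is more conceptual and makes transparent exactly where $p\ne 2$ enters (at $p=2$ the root-multiplicity count matches), at the cost of invoking real-analyticity and factorization over $\mathbb{C}$; the paper's version stays within real inequalities but requires a more intricate case analysis. Both are complete.
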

\begin{proof}
	Let us assume that $u \perp_2 v$ with $ u \ne 0$ and $v \ne 0$. Without any loss of generality, we may assume that $\Vert u \Vert_p = 1 = \Vert v \Vert_p$. That is, 
	\[ \vert \alpha_1 \vert^p + \vert \alpha_2 \vert^p = 1 = \vert \beta_1 \vert^p + \vert \beta_2 \vert^p. \] 
	As $u \perp_2 v$, we have 
	\[ \Vert k u + l v \Vert_p^2 = k^2 \Vert u \Vert_p^2 + l^2 \Vert v \Vert_p^2 = k^2 + l^2 \] for all $k, l \in \R$. 
	In other words, 
	\[ (\dagger) \qquad (\vert k \alpha_1 + l \beta_1 \vert^p + \vert k \alpha_2 + l \beta_2 \vert^p)^{\frac 2p} = k^2 + l^2 \] 
	for all $k, l \in \R$. For $k = \alpha_1$ and $l = - \beta_1$, we get 
	\[ (*) \qquad (\beta_1 \alpha_2 - \alpha_1 \beta_2)^2 = \beta_1^2 + \alpha_1^2 \] 
	and for $k = \alpha_2$ and $l = - \beta_2$, we get 
	\[ (**) \qquad (\beta_2 \alpha_1 - \alpha_2 \beta_1)^2 = \beta_2^2 + \alpha_2^2. \] 
	In other words, 
	\[ (\beta_2 \alpha_1 - \alpha_2 \beta_1)^2 = \alpha_1^2 + \beta_1^2 = \alpha_2^2 + \beta_2^2. \] 
	Since $\vert \alpha_2 \vert = (1 - \vert \alpha_1 \vert^p)^{\frac 1p}$ and $\vert \beta_1 \vert = (1 - \vert \beta_2 \vert^p)^{\frac 1p}$, we deduce that 
	\[ \vert \alpha_1 \vert^2 - (1 - \vert \alpha_1 \vert^p)^{\frac 2p} = \vert \beta_2 \vert - (1 - \vert \beta_2 \vert^p)^{\frac 2p}. \] 
	Consider the real valued function $f(x) = x^2 - (1 - x^p)^{\frac 2p}$, $x \in [0, 1]$. Then $f'(x) = 2 x + 2 x^{p - 1} (1 - x^p)^{\frac 2p - 1} > 0$ for all $x \in (0, 1)$ so that $f$ is strictly increasing in $[0, 1]$. Thus $\vert \alpha_1 \vert = \vert \beta_2 \vert$ and consequently, $\vert \alpha_2 \vert = \vert \beta_1 \vert$. 
	
	As $u \ne 0$, we have either $\alpha_1 \ne 0$ or $\alpha_2 \ne 0$. For definiteness, we assume that $\alpha_1 \ne 0$. Then $\beta_2 \ne 0$. We show that $\alpha_2 = 0 = \beta_1$.
	
	Case 1. $\alpha_1 \alpha_2 \beta_1 \beta_2 \ge 0$. 
	
	Replacing $l$ by $- l$ in $(\dagger)$, we get 
	\[ (\vert k \alpha_1 + l \beta_1 \vert^p + \vert k \alpha_2 + l \beta_2 \vert^p)^{\frac 2p} = (\vert k \alpha_1 - l \beta_1 \vert^p + \vert k \alpha_2 - l \beta_2 \vert^p)^{\frac 2p} \] 
	for all $k, l \in \R$. In particular, for $k = \beta_1$ and $l = \alpha_1$ we get 
	\[ \vert 2 \alpha_1 \beta_1 \vert^p + \vert \beta_1 \alpha_2 + \alpha_1 \beta_2 \vert^p = \vert \beta_1 \alpha_2 - \alpha_1 \beta_2 \vert^p. \] 
	Dividing this identity by $\vert \alpha_1 \beta_2 \vert^p$, we get 
	\[ 2^p \left\vert \frac{\beta_1}{\beta_2} \right\vert^p + \left\vert \frac{\beta_1 \alpha_2}{\alpha_1 \beta_2} + 1 \right\vert^p = \left\vert \frac{\beta_1 \alpha_2}{\alpha_1 \beta_2} - 1 \right\vert^p. \] 
	As $\alpha_1 \alpha_2 \beta_1 \beta_2 \ge 0$, we have $0 \le \frac{\beta_1 \alpha_2}{\alpha_1 \beta_2}= \left\vert \frac{\beta_1 \alpha_2}{\alpha_1 \beta_2} \right\vert = \left\vert \frac{\alpha_2}{\alpha_1} \right\vert^2$ so that 
	\[ 2^p \left\vert \frac{\alpha_2}{\alpha_1} \right\vert^p + \left\vert \left\vert \frac{\alpha_2}{\alpha_1} \right\vert^2 + 1 \right\vert^p = \left\vert \left\vert \frac{\alpha_2}{\alpha_1} \right\vert^2 - 1 \right\vert^p. \] 
	Now, it follows that $\left\vert \frac{\alpha_2}{\alpha_1} \right\vert^2 = 0$ so that $\alpha_2 = 0$. 
	
	Case 2. $\alpha_1 \alpha_2 \beta_1 \beta_2 \le 0$. 
	
	Since $\vert \alpha_1 \vert^p + \vert \alpha_2 \vert^p = 1 = \vert \beta_1 \vert^p + \vert \beta_2 \vert^p$, $(*)$ may be homogenized to 
	\[ (\beta_1 \alpha_2 - \alpha_1 \beta_2)^2 = \beta_1^2 + \alpha_1^2 = \beta_1^2 (\vert \alpha_1 \vert^p + \vert \alpha_2 \vert^p)^{\frac 2p} + \alpha_1^2 (\vert \beta_1 \vert^p + \vert \beta_2 \vert^p)^{\frac 2p}. \] 
	Dividing this identity by $\vert \alpha_1 \beta_2 \vert^2$, we get 
	\[ (\#) \qquad \left( \frac{\beta_1 \alpha_2}{\alpha_1 \beta_2} - 1 \right)^2 = \left( \frac{\beta_1}{\beta_2} \right)^2 \left( 1 + \left\vert \frac{\alpha_2}{\alpha_1} \right\vert^p \right)^{\frac 2p} + \left( 1 + \left\vert \frac{\beta_1}{\beta_2} \right\vert^p \right)^{\frac 2p}. \] 
	Now $\left\vert \frac{\alpha_2}{\alpha_1} \right\vert = \left\vert \frac{\beta_1}{\beta_2} \right\vert$. Thus as $\alpha_1 \alpha_2 \beta_1 \beta_2 \le 0$, we have $0 \ge \frac{\beta_1 \alpha_2}{\alpha_1 \beta_2} = - \left\vert \frac{\alpha_2}{\alpha_1} \right\vert^2$. Hence $(\#)$ reduces to 
	\[ \left( 1 + \left\vert \frac{\alpha_2}{\alpha_1} \right\vert^2 \right)^2 = \left( 1 + \left\vert \frac{\alpha_2}{\alpha_1} \right\vert^2 \right) \left( 1 + \left\vert \frac{\alpha_2}{\alpha_1} \right\vert^p \right)^{\frac 2p} \] 
	so that $ 1 + \left\vert \frac{\alpha_2}{\alpha_1} \right\vert^2 = \left( 1 + \left\vert \frac{\alpha_2}{\alpha_1} \right\vert^p \right)^{\frac 2p}$. Therefore, $\left( 1 + \left\vert \frac{\alpha_2}{\alpha_1} \right\vert^2 \right)^{\frac 12} = \left( 1 + \left\vert \frac{\alpha_2}{\alpha_1} \right\vert^p \right)^{\frac 1p}$. Since $p \ne 2$, we deduce that $\left\vert \frac{\alpha_2}{\alpha_1} \right\vert = 0$ whence $\alpha_2 = 0$. 
	
	Hence in both cases, we get that $\alpha_1 \ne 0$ implies $\alpha_2 = 0$ so that $\vert \alpha_1 \vert = \vert \beta_2 \vert = 1$ and $\alpha_2 = 0 = \beta_1$. Now, by $(\dagger)$, we have $k^2 = k^2 + l^2$ for all $k, l \in \R$, which is absurd. Hence for $u \perp_2 v$, we must have either $u = 0$ or $v = 0$. The converse holds trivially.
\end{proof}

\thanks{{\bf Acknowledgements:} The author is thankful to the referee for his valuable suggestions. This research was partially supported by Science and Engineering Research Board, Department of Science and Technology, Government of India sponsored  Mathematical Research Impact Centric Support Project (Reference No. MTR/2020/000017).}

\end{document}